	\newaliascnt{lemma}{thm}
	\newtheorem{lemma}[lemma]{Lemma}  
	\newaliascnt{prop}{thm}
	\newtheorem{prop}[prop]{Proposition} 
	\newaliascnt{cor}{thm}
	\newtheorem{cor}[cor]{Corollary}
	\theoremstyle{remark}
	\newaliascnt{rem}{thm}
	\newtheorem{rem}[rem]{Remark}
	\theoremstyle{definition}
	\newaliascnt{exm}{thm}
	\newtheorem{exm}[exm]{Example}
	\newaliascnt{notn}{thm}
	\newaliascnt{defn}{thm}
	\newtheorem{defn}[defn]{Definition}
	\newcommand{\C}{\mathbb{C}}
	\newcommand{\K}{\mathbb{K}}
	\newcommand{\id}{\operatorname{id}}
	\newcommand{\sect}[1]{\operatorname{\Gamma}\left(#1\right)}
	\newcommand{\Hom}{\operatorname{Hom}}
	\newcommand{\Cp}[1]{\operatorname{C}^p\left(#1\right)}
\begin{document}

\title{On noncommutative equivariant bundles}
\author{Francesco D'Andrea}
\author{Alessandro De Paris}
\date{}
\address{Dipartimento di Matematica e Applicazioni ``Renato Caccioppoli'',\newline\indent Universit\`a di Napoli Fe\-de\-ri\-co II (Italy)}
\email{francesco.dandrea@unina.it, deparis@unina.it}

\begin{abstract}
We discuss a possible noncommutative generalization of the notion of an equivariant vector bundle. Let $A$ be a $\K$-algebra, $M$ a left $A$-module, $H$ a Hopf $\K$-algebra, $\delta:A\to H\otimes A:=H\otimes_\K A$ an algebra coaction, and let $(H\otimes A)_\delta$ denote $H\otimes A$ with the right $A$-module structure induced by~$\delta$. The usual definitions of an equivariant vector bundle naturally lead, in the context of $\K$-algebras, to an $(H\otimes A)$-module homomorphism \[\Theta:H\otimes M\to (H\otimes A)_\delta\otimes_AM\] that fulfills some appropriate conditions. On the other hand, sometimes an $(A,H)$-Hopf module is considered instead, for the same purpose. When $\Theta$ is invertible, as is always the case when $H$ is commutative, the two descriptions are equivalent.  We point out that the two notions differ in general, by giving an example of a noncommutative Hopf algebra $H$ for which there exists such a $\Theta$ that is not invertible and a left-right $(A,H)$-Hopf module whose corresponding homomorphism $M\otimes H\to (A\otimes H)_\delta\otimes_AM$ is not an isomorphism.
\end{abstract}

\maketitle

\textbf{MSC2010}: 16T05, 16W22.

\textbf{Keywords}: Equivariant bundle, Hopf algebra, Hopf module.

\section{Discussion}\label{Disc}
Here we discuss how an equivariant action ought to be generalized when Lie groups are replaced by Hopf algebras, actions on manifolds by coactions on algebras, and vector bundles by modules. This question came to our attention while we were reading \cite{LS}, and after a not-so-quick look at the literature we found a specific discussion only in \cite[Sect.~4]{S}. From both \cite[Subs.~3.1]{LS} and \cite[Subs.~4.4.1,~4.5.6]{S}, the reader might be induced to believe that a general (noncommutative) algebraic generalization of an equivariant bundle should consist of a module coaction over an algebra coaction of a Hopf algebra, that is, of a (relative) Hopf module (also called a Doi-Hopf module).

\subsection{Relative Hopf Modules}\label{RHM}

Let $\K$ be a field and $H$ a $\K$-bialgebra with comultiplication $\Delta:H\to H\otimes H$ (\footnote{Tensor products of modules without indication of the base ring are understood over $\K$; algebras are assumed to be associative and unital, and modules over them are unital; coalgebras are assumed to be coassociative and counital, and comodules over them are counital.}) and counit $\varepsilon:H\to\K$. In this work we consider left coactions and left modules (in \cite[4.4.1]{S} right coactions are considered instead). Thus, let $A$ be a left $H$-comodule algebra, with coaction \[\delta:A\to H\otimes A\;,\] that is, $\delta$ is a coalgebra left coaction on the vector space $A$ and a $\K$-algebra homomorphism. We also fix a left $A$-module  $M$. By saying that
\[
\overline{\delta}:M\to H\otimes M
\]
is a \emph{module coaction over $\delta$}, we mean that $\overline{\delta}(am)=\delta(a)\overline{\delta}(m)$ for all $a\in A,m\in M$, and that $\overline{\delta}$ is a coaction of $H$ on the vector space $M$.  We can equivalently say that $M$ is a \emph{left-left relative $(A,H)$-Hopf module}, following \cite[4.4.1]{S}.

Hopf modules in their simplest form, that is, $A=H$ with $\delta$ being the comultiplication, are treated in \cite{Sw}. The generalization to the relative ones was introduced and studied in a slightly less general setting (see \cite{T2}), under the hypothesis that $A$ is a coideal subalgebra, that is, $A$ is a subalgebra of $H$ such that $\Delta$ can be restricted to $\delta$. The present notion was introduced by Y.~Doi in \cite{D} (but here we prefer to refer to Doi's $(A,B)$--Hopf modules as right-right relative $(B,A)$--Hopf modules). In \cite[4.4.1]{S}, as well as in other papers (see, e.g., \cite[p.~111]{M} or \cite[Def.~2.1]{Sc}), left-right structures are considered.

Although Hopf modules are a natural notion for the description of noncommutative equivariant bundles, elementary considerations indicate indicate that this notion may not work in some pathological cases, at the basic level of generality where groups are replaced by Hopf algebras (cf.\ \cite[3.2]{S}). Let us now explain to some extent what these considerations are.

\subsection{Bundle Morphisms and Module Homomorphisms}

Let $\overline{f}:E\to E'$ be a morphism of the vector bundles $\pi:E\to X$ and $\pi':E'\to X'$, over a base morphism $f:X\to X'$ (that is, $\pi'\circ \overline{f}=f\circ\pi$ and the induced maps on the fibers are linear). 
Suppose that geometric structures on $X$ and $X'$ can be suitably encoded by algebras $A$ and $A'$. 
For instance, if $X$ and $X'$ are $\operatorname{C}^p$-manifolds, then $A=\Cp{X}$ and $A'=\Cp{X'}$; if $X$ and $X'$ are algebraic affine varieties, then $A=\mathcal{O}(X)$ and $A'=\mathcal{O}\left(X'\right)$. 
In these examples, the bundle structures can be encoded by the modules $M=\sect\pi$ and $M'=\sect{\pi'}$ of structure preserving ($\operatorname{C}^p$ or regular algebraic) global sections; the algebraic counterpart of $f$ is an algebra homomorphism $\varphi:A'\to A$ and (a structure preserving) $\overline{f}$ corresponds to an $A$-module homomorphism
\[
\overline\psi:M\to A\otimes_{A'}M'
\]
(\footnote{We explicitly mention that $A\otimes_{A'}M'$ is the module of sections of the pull-back $f^\ast\pi'$ and that taking global sections gives a \emph{covariant} functor $\operatorname{\Gamma}$. 
Even when sheaves are needed (e.g., for quasi-projective algebraic varieties), morphisms of vector bundles with the same base manifold give rise to morphisms of the corresponding sheaves of sections in a covariant way. 
For sheaves over schemes, a contravariant correspondence (basically, dual to the former) may also be considered (see \cite[Chap. II, Exer.~5.7]{H}), but we will not adopt that viewpoint. 
Note also that in \cite[p.180, Definition]{H}, the tangent sheaf is the sheaf of sections of the tangent bundle, so that they do not correspond to each other via the contravariant correspondence $\mathbf{V}$ introduced in the mentioned Exercise. 
It is worth remarking that the algebraic description of vector bundles by modules (or more generally, by sheaves) is appropriate in the situations when the role of the total spaces can be encapsulated in the properties of vector bundle morphisms. 
In applications for which some analysis on the total spaces is needed, the algebraic counterpart of bundles may become more complicated (cf.\ \cite[1.1.13]{DV}).}). 

Since $A\otimes_{A'}M'$ is the module obtained by extension of scalars via $\varphi$, if $\overline\psi$ is invertible, then the inverse homomorphism ${\overline\psi\,}^{-1}:A\otimes_{A'}M'\to M$ naturally corresponds to an $A'$-module homomorphism of $M'$ to the $A'$-module obtained from $M$ by restriction of scalars via~$\varphi$ (\footnote{It suffices to compose it with the natural homomorphism $m'\mapsto 1\otimes m'$, and it is just the assertion that extension of scalars and restriction of scalars are adjoint functors.}). We can equivalently say that we have a module homomorphism
\[
\overline\varphi:M'\to M
\]
\emph{over $\varphi:A'\to A$} (the \emph{base} algebra homomorphism), by meaning with this that $\overline\varphi$ is additive and
\[
\overline\varphi(am)=\varphi(a)\overline\varphi(m)\;,\qquad\forall a\in A,m\in M\;.
\]

Note that this simpler algebraic counterpart of $\overline f$ can always be employed when $\overline f$ is given by the action of a group element (and under the assumption that global sections suffice for an equivalent algebraic description). 
Indeed, in naive terms, an action of a group $G$ on $X$ consists of a family $\left\{\alpha_g\right\}_{g\in G}$ of transformations of $X$ into itself such that $\alpha_1=\id_X$ and $\alpha_{gg'}=\alpha_g\circ\alpha_{g'}$; an equivariant action on $\pi$ consists of a family $\left\{\overline\alpha_g\right\}_{g\in G}$ of morphisms of $\pi$ into itself such that $\overline\alpha_g$ is a morphism over $\alpha_g$ for each $g$, $\overline\alpha_1=\id_E$ and $\overline\alpha_{gg'}=\overline\alpha_g\circ\overline\alpha_{g'}$. 
Then $\overline\alpha_g$ induces an isomorphism simply because it is a vector bundle isomorphism (with $\overline\alpha_{g^{-1}}$ as its inverse morphism).

\subsection{Families of Bundle Morphisms}

In the situation we have just described, usually $G$ comes endowed with a geometric structure of the same kind as that on $X$, and the action is regular with respect to these structures and the vector bundle structure (\footnote{We also mention that results in Chapter~5 of the Grothendieck's \textit{T\^ohoku} paper encompass nonregular actions, too.}). 
The regularity hypothesis on the base is easily encoded by requiring that the family $\left\{\alpha_g\right\}_{g\in G}$ comes from a morphism $\alpha:G\times X\to X$, simply by setting $\alpha_g:=\alpha\circ\left(g,\id_X\right)$ for all $g$. 
To encode the equivariant action $\overline\alpha$ on $\pi$, one has to consider a vector bundle morphism over $\alpha$ such that for each $g$ we have a morphism $\overline\alpha_g$ of $\pi$ into itself over $\alpha_g$. 
To this end, the domain of $\overline\alpha$ must be $p_2^\ast\pi$, with $p_2:G\times X\to X$ being the projection map, so that $\left(g,\id_X\right)$ can naturally lift to a morphism of $\pi$ into that domain, for all $g$. 
Hence equivariant actions are given by vector bundle morphisms $\overline\alpha$ of $p_2^\ast\pi$ into $\pi$ over $\alpha$. 
The same motivation holds, more generally, for the definition of regular families of vector bundle morphisms: they are given by vector bundle morphisms of $p_2^\ast\pi$ to $\pi'$ over a morphism $T\times X\to X'$, with $T$ being a space of parameters.

In the above situation, to exploit the regularity of the action, one has to work with the map $\overline{\alpha}$ rather than with the family $\left\{\overline\alpha_g\right\}_{g\in G}$. This is a basic level at which the algebraic formulation we are discussing comes into play, and of course that formulation becomes fundamental in the noncommutative context. 
In general, even in the classic context of $\operatorname{C}^p$-manifolds, formulations like these deserve some attention (cf.\ \cite{N} and \cite{DV}).

In the context of affine varieties (and affine group varieties) over $\K$, the algebraic counterpart of $\alpha$ is a $\K$-algebra homomorphism
\[
\delta:A\to\mathcal{O}(G\times X)=H\otimes A\;,
\]
with $H:=\mathcal{O}(G)$ (\footnote{Strictly speaking, the identification $\mathcal{O}(G\times X)=H\otimes A$ is allowed when $\K$ is algebraically closed. It also works with no trouble for every $\K$, provided that varieties are considered as instances of schemes over $\K$.

For compact group actions on homogeneous spaces, there are canonical dense subalgebras $\mathcal{O}(X)$ and $\mathcal{O}(G)$ of $C(X)$ and $C(G)$ such that the coaction $C(X)\to C(G\times X)$ restricted to $\mathcal{O}(X)$ maps to the algebraic tensor product $\mathcal{O}(G)\otimes\mathcal{O}(X)$, i.e., becomes an algebraic coaction of the Hopf algebra $\mathcal{O(G)}$ on $\mathcal{O}(X)$. 
This is true even for compact quantum groups. 
See \cite{P} for details.

In the context of $\operatorname{C}^p$-manifolds, we also have an algebra homomorphism $\Cp X\to\Cp{M\times X}$, but for the description of $\Cp{M\times X}$, the ordinary tensor product does not work in general (see \cite[10.3]{N} and \cite[0.2.24]{DV}). 
A $\operatorname{C}^p$-tensor product and a related algebraic operation on modules for the description of pull-back bundles may easily be introduced (in other words, the whole situation may be described in a simple way in the context of monoidal and fibered categories of an algebraic kind).

For the sake of brevity, in the rest of the discussion we shall restrict ourselves to affine algebraic varieties as a guiding example.}). 
The algebraic counterpart of $\overline{\alpha}$, at least as an instance of the more general notion of a family of vector bundle morphisms, is given by an $(H\otimes A)$-module homomorphism from \[\sect{p_2^\ast\pi}=\left(H\otimes A\right)\otimes_AM=H\otimes M\] to \[\sect{\alpha^\ast\pi}=\left(H\otimes A\right)_\delta\otimes_AM\;,\] where $(H\otimes A)_\delta$ indicates that $H\otimes A$ is endowed with the $A$-module structure induced via $\delta$ (whereas, in the description of $\sect{p_2^\ast\pi}$, $H\otimes A$ is understood with the standard $A$-module structure induced by the second factor).

\subsection{The Isomorphism Hypothesis}

No surprise that the reasonable basic description we have outlined above can soundly be linked to the literature. Indeed, it is basically an instance in the affine (and ``commutative'') case of the descriptions that can be found, e.g., in \cite[0.2]{BL}, \cite[Def.~2.1]{K}, \cite[4.5.4]{S}, \cite[Chap.~1, Sec.~3, Def.~1.6]{MFK} (\footnote{In the latter reference one finds another friendly justification for the description we are dealing with, in the more general context of sheaves (though restricted to invertible sheaves, which correspond to line bundles).}). To be precise, there is a small but important difference. Indeed, the description that one gets in the affine (and commutative) case from the cited references consists of a module \emph{isomorphism}. Moreover, in our notation, that isomorphism goes from $\sect{\alpha^\ast\pi}$ to $\sect{p_2^\ast\pi}$, whereas we introduced a module homomorphism from $\sect{p_2^\ast\pi}$ to $\sect{\alpha^\ast\pi}$. Of course, once one has recognized that the homomorphism is indeed an isomorphism, no substantial difference is in view (\footnote{Another technical difference that some reader might have noted is that in \cite[Chap.~1, Sec.~3, Def.~1.6]{MFK} (and in \cite[4.5.4]{S}) some natural isomorphisms are explicitly displayed in what is called the \emph{cocycle condition}, whereas they are understood in \cite[Def.~2.1]{K}. Under appropriate technical conventions, some natural isomorphisms can be omitted at all: cf.\ \cite[0.1.1]{DV}, at the beginning of p.~2. In terms of fibered categories, these conventions basically consist (at least in the present situation) in that a cleavage is chosen in course of the exposition, a bit like for Grothendieck universes, or also for `generic objects' in classical Algebraic Geometry. We shall explicitly explain these conventions at the beginning of the next section, and some fundamental conditions, such as \eqref{Eb1} and \eqref{Eb2}, will be written under them.}).

For affine varieties, to recognize that we are dealing in fact with an isomorphism is quite easy, since $\overline\alpha$ induces isomorphisms on the fibers (from that over $(g,a)$ to that over $g\cdot a=\alpha(g,a)$, for each $g,a$). For schemes, one can not work `pointwise': usual techniques lead to consider the map
\[
G\times X\;\overset{\operatorname{diag}\times\id_X}{\longrightarrow}\;G\times G\times X\;\overset{i\times\alpha}{\longrightarrow}\; G\times X
\]
with $\operatorname{diag}$ and $i$ being the diagonal and the inverse maps. Alternatively, one can follow a category-theoretic approach: see \cite[Prop.~3.49]{V}. Note also that for affine group schemes one has, in addition, that they must be reduced, at least when $\K$ has characteristic zero (see \cite[Lec.~25, Th.~1]{M2}), and the morphisms $\alpha_g$ must be isomorphisms even when $X$ is nonreduced. Hence, even a pointwise approach might suffice (\footnote{It may seem a bit odd that the redundant isomorphism hypothesis has been required in the definitions we mentioned. One reason might be that in the context of works such as \cite{K} and \cite{MFK}, results such as \cite[Lec.~25, Th.~1]{M2} may have been considered as granted (note also that in \cite{K} there is a standing assumption that the ground field has characteristic zero). Hence the fact that the homomorphism involved is in fact an isomorphism might have been considered quite intuitive, if not obvious, and to put an explicit remark would have been distracting from the main focus. In \cite[0.2]{BL} they deal with topological spaces and groups, so the assumption that the considered map is an isomorphism is even more reasonable.}).

In any case, we also include in this paper the result that when $H$ is commutative we always have an isomorphism, as a consequence of (the algebraic counterparts of) the conditions that define an action: see \autoref{Comm} (\footnote{In the popular, but reliable, Wikipedia website one also finds a webpage on equivariant sheaves (at the time of writing it is \url{https://en.wikipedia.org/w/index.php?title=Equivariant_sheaf&oldid=835164209}). Even there, the isomorphism hypothesis is assumed in the definition (together with the cocycle condition). It is also noted that the action condition about the identity is a consequence, but the remark that the isomorphism condition is a consequence of the two action conditions is missing.}). We give a short direct algebraic proof, which may be useful for comparison with the noncommutative situation. When $A$ is commutative as well, \autoref{Comm} becomes an instance of \cite[Prop.~3.49]{V}. We mention that in \cite[Def.~3.46]{V} one finds a very general notion of an equivariant object, convincingly placed on the ground of fibered categories, which is also recalled in \cite[4.1]{S}. This notion can encompass also nonregular actions, such as those considered in the \textit{T\^ohoku} paper, and for group schemes gives the (regular) actions as defined in \cite[Chap.~1, Sec.~3, Def.~1.6]{MFK}.

\subsection{Conclusive Statements}

We have just outlined the following facts (some of which we are going to prove in detail in the next section):
\begin{itemize}
\item In the `commutative situation', an equivariant bundle corresponds to a module homomorphism
\[
\Theta:\sect{p_2^\ast\pi}=H\otimes M\longrightarrow\left(H\otimes A\right)_\delta\otimes_AM=\sect{\alpha^\ast\pi}
\]
(or, more generaly, to an analogous sheaf homomorphism) that must fulfill appropriate counterparts of the conditions that define an action.
\item These conditions imply that $\Theta$ must be an isomorphism (even when $A$, but not $H$, is not commutative).
\item By adjointness of extension and restriction of scalars, $\Theta^{-1}$ corresponds to a homomorphism $\overline\delta:M\to H\otimes M$ over $\delta$.
\item Again by the action conditions, $\overline\delta$ defines a relative Hopf module.
\end{itemize}

What we argue in this paper is that for some noncommutative Hopf algebras, contrary to the commutative case, the two (counterpart of) action conditions do not imply that an homomorphism \[\Theta:H\otimes M\to (H\otimes A)_\delta\otimes_AM\] must be an isomorphism. To this end, in \autoref{Ex} we shall use one of the simplest among the Hopf algebras whose antipode was shown to be not bijective in \cite{T1}, and exhibit a map $\Theta$ that is not an isomorphism. In this case, the simplified description given by a relative $(A,H)$--Hopf module does not apply. Moreover, in \autoref{ExHopf} we show that there exists a relative $(A,H)$--Hopf module that comes from no invertible map $\Theta$ as above.

Let us mention that a different simplified description can still be considered, again because of adjointness of extension and restriction of scalars. Namely, to assign $\Theta$ is the same as to assign the left $A$-module homomorphism
\[
\theta:M\to (H\otimes A)_\delta\otimes_AM\;,\qquad m\mapsto\Theta(1\otimes m)
\]
where the $A$-module structure on the target is induced by the second factor in $H\otimes A$, (\footnote{A similar option for coherent sheaves, in the situation of \cite[Chap.~1, Sec.~3, Def.~1.6]{MFK}, is to consider a morphism $L\to {p_2}_\ast\alpha^\ast L$, with $p_2$ and $\alpha$ as before.}).

Finally, we remark that the module considered in \autoref{Ex} is free (of rank two) and the base algebra is noncommutative. Hence, to view that example as an exotic kind of noncommutative equivariant vector bundle (trivial, of rank two) may be reasonable. More generally, at least at the algebraic level, it is not unreasonable to view projective (maybe also finitely generated) modules over noncommutative rings as noncommutative vector bundles, because of the Swan's theorem: \cite[Subs.~3.1]{LS} seems to adopt this viewpoint. In this frame, we would have that the `right definition' of a noncommutative equivariant vector bundle is given by the homomorphism $\Theta$ (or $\theta$), provided that $M$ is (at least) a projective module.
In the final section we present some related problems that arise in the context of fibered categories, mainly in connection with the results of \cite[Sect.~3.8]{V}.

\subsection{Acknowledgements}

We are grateful to Tomasz Brzezi\'nski and Zoran \v Skoda for providing us with some feedbacks on an early draft of the present work.

\section{An exotic noncommutative equivariant bundle}\label{Enb}

\subsection{Basic results and conventions}

For the reader convenience, we explicitly recall some elementary results and stipulate some conventions about tensor products and extension of scalars. To this end, let us consider a field $\K$, a $\K$-algebra $A$, a left $A$-module $M$ and a right $A$-module $M'$ (as anticipated in the previous section, they are all assumed to be unital).

We assume no fixed general construction of tensor products: $M'\otimes_AM$ can be any $\K$-vector space together with an $A$-balanced map $\beta:M'\times M\to M'\otimes_AM$, $\beta\left(m',m\right)=:m'\otimes m$ (\footnote{By saying that $\beta$ is \emph{$A$-balanced} we mean that it is $\K$-bilinear and $m'a\otimes m=m'\otimes am$, for all $m\in M$, $m'\in M'$, $a\in A$.}) that satisfies the universal property: for every $\K$-vector space $V$ and every $A$-balanced map $b:M'\times M\to V$ there exists a unique homomorphism $\overline b:M'\otimes_A M\to V$ of $\K$-vector spaces such that $b=\overline b\circ\beta$ (cf.\ \cite[p.~25, Rem.~iii]{AM}). Sometimes a particular choice of a tensor product will be convenient, and in this case it will be explicitly indicated. Tensor products of modules without indication of the base ring will be understood as tensor products of $\K$-vector spaces (sometimes equipped with module structures inherited from some additional module structures on the factors). From the universal property readily follows that for every given left $A$-module homomorphism $f:M_0\to M_1$ and right $A$-module homomorphism $f':M'_0\to M'_1$, there exists exactly one $\K$-vector space homomorphism \[f'\otimes_A f:M'_0\otimes_A M_0\to M'_1\otimes_A M_1\] such that $\left(f'\otimes_A f\right)\left(m\otimes m'\right)=f(m)\otimes f'\left(m'\right)$ for all $m\in M_0$, $m'\in M'_0$. We also recall that when $M$ and $M'$ are $\K$-algebras, $M'\otimes M$ is also a $\K$-algebra with the multiplication being the only one such that $\left(m'_1\otimes m_1\right)\left(m'_2\otimes m_2\right)=\left(m'_1m'_2\otimes m_1m_2\right)$ (\footnote{This holds, more generally, for $M'\otimes_RM$ when $R$ is a commutative ring and $M,M'$ are $R$-algebras.}).

We shall use the notation $\varphi^\ast$ for extension of scalars of left modules via a $\K$-algebra homomorphism $\varphi:A\to B$: \[\varphi^\ast M:=B\otimes_A M\;,\] with $B$ considered as a right $A$-module via $\varphi$ ($ba:=b\varphi(a)$), and with the naturally induced left $B$-module structure
\[
bx:=\left(\mu_b\otimes_A\id_M\right)(x)\;,\quad\forall b\in B,x\in\varphi^\ast M\;,
\]
where $\mu_b:B\to B$ is the multiplication by $b$ on the left (in other words, the structure is the unique one such that $b\left(b'\otimes m\right)=bb'\otimes m$ for all $b,b'\in B$ and $m\in M$).

Let us recall the universal property of extension of scalars. We have a natural map $\nu:M\to\varphi^\ast M$, $\nu(m):= 1\otimes m$ for all $m\in M$, which is a left module homomorphism over $\varphi$ (that is, $\nu(am)=\varphi(a)\nu(m)$) and is universal in the following sense: for every given left $B$-module $N$ and left module homomorphism $\overline\varphi:M\to N$ over $\varphi$, there exists exactly one left $B$-module homomorphism $f:\varphi^\ast M\to N$ such that $f\circ\nu=\overline\varphi$. We say that \emph{$f$ and $\overline\varphi$ correspond} to each other \emph{via $\varphi$}. Note also that, according to our conventions, we can assume that $A\otimes_AM=M$, $a\otimes m=am$, and with this choice we have $\nu=\varphi\otimes_A\id_M$.

From the universal property easily follows that, given a left $A$-module homorphism $g:M_0\to M_1$, there is exactly one left $B$-module homomorphism \[\varphi^\ast g:\varphi^\ast M_0\to\varphi^\ast M_1\] such that $\varphi^\ast g\circ\nu_0=\nu_1\circ g$, with $\nu_0,\nu_1$ being the natural maps. The homomorphism $\varphi^\ast g$ is said to be obtained from $g$ by extension of scalars via $\varphi$ (we also have $
\varphi^\ast g=\id_B\otimes g:B\otimes_AM_0\to B\otimes_AM_1$).

\subsection{Standing notation}\label{Standing}

Let us introduce some notations that will be considered as fixed in the rest of the paper. Let $\K$ be a field and $H$ a $\K$-bialgebra, with comultiplication $\Delta:H\to H\otimes H$, counit $\varepsilon:H\to\K$, unit $\eta:\K\to H$ and multiplication $\mu:H\otimes H\to H$. Let $A$ be a $\K$-algebra, $M$ a left $A$-module and \[\delta:A\to H\otimes A\] an algebra left coaction of $H$ on $A$, i.e., $\delta$ is a $\K$-algebra homomorphism and we have
\begin{equation}\label{Coaction}
\left(\Delta\otimes\id_A\right)\circ\delta=\left(\id_H\otimes\delta\right)\circ\delta\;,\qquad\left(\varepsilon\otimes\id_A\right)\circ\delta=\id_A\;,
\end{equation}
under the assumptions $H\otimes(H\otimes A)=(H\otimes H)\otimes A=:H\otimes H\otimes A$ and $\K\otimes A=A$ (\footnote{Each tensor product comes implicitly equipped with a balanced map: by writing these equalities of modules we also consider as understood that the balanced maps are such that $(h_1\otimes h_2)\otimes a=h_1\otimes(h_2\otimes a)=:h_1\otimes h_2\otimes a$ and $\lambda\otimes a=\lambda a$ for all $h_1,h_2\in H$, $a\in A$ and $\lambda\in\K$.}). The choices of these tensor products, as well as the choice $\K\otimes M=M$, will be kept along the paper as well.

Let $(H\otimes A)_\delta$ denote the $\K$-algebra $H\otimes A$ considered as a right $A$-module by means of $\delta$. We consider a map \[\theta:M\to (H\otimes A)_\delta\otimes_AM\;,\] we assume that the codomain is equipped with the left $(H\otimes A)$--module structure determined by the condition
\[
\left(h\otimes a\right)\left(\left(h'\otimes a'\right)\otimes m\right)=\left(hh'\otimes aa'\right)\otimes m\;,\quad\forall h,h'\in H,\;a,a'\in A,\;m\in M\;,
\]
and that $\theta$ is a module homomorphism over the natural algebra homomorphism $\nu:A\to H\otimes A$, $\nu(a):=1\otimes a$ (that is, $\theta(am)=\nu(a)\theta(m)$). We also consider the corresponding left $(H\otimes A)$--module homomorphism \[\Theta:H\otimes M\to (H\otimes A)_\delta\otimes_AM\] determined by the condition \[\Theta(1\otimes m)=\theta(m)\;,\quad\forall m\in M\]
(we shall soon rewrite $\Theta$ in notation of extension of scalars).

We shall sometimes assume the sumless Sweedler notation $\delta(a)=:a_{(-1)}\otimes a_{(0)}$ and $\Delta(h)=:h_{(1)}\otimes h_{(2)}$. We shall also make use of more elaborated Sweedler-like sumless notations, like
\[
\theta(m)=:m_{(-1)}\otimes m_{(0)}\otimes m_{(1)}\;,
\]
which is to be understood, as usual, as an abbreviation for \[\sum_i\left(\sum_jm_{-1,i,j}\otimes m_{0,i,j}\right)\otimes m_{1,i}\;, \quad m_{-1,i,j}\in H,\;m_{0,i,j}\in A,\;m_{1,i}\in M\;.\]
The combined use of these notations requires some caution, especially in the case when $M=A$, because of potential ambiguities. We shall make use of it only in a few situations, where will be convenient and sufficiently safe.

\subsection{A noncommutative notion for equivariant vector bundles}

As we explained in \autoref{Disc}, at least when $H$ is a Hopf algebra and $M$ is projective and finitely generated, $\theta$ (or, equivalently, $\Theta$) can be considered as a noncommutative equivariant bundle, provided that some algebraic conditions, encoding the geometric action conditions, are satisfied (\footnote{To be precise, it is not $\theta$ (or $\Theta$) alone that works, because the understood balanced maps of the tensor products must also be taken into account.}). These conditions can be efficiently written by means of Sweedler-like notations:
\begin{multline}\label{SwEb1}
m_{(1)(-1)}\otimes m_{(-1)}m_{(1)(0)(-1)}\otimes m_{(0)}m_{(1)(0)(0)}\otimes m_{(1)(1)}\\
=m_{(-1)(1)}\otimes m_{(-1)(2)}\otimes m_{(0)}\otimes m_{(1)}
\end{multline}
in $\left(H\otimes H\otimes A\right)_\gamma\otimes_AM$, with $\gamma:=\left(\Delta\otimes\id_A\right)\circ\delta=\left(\id_H\otimes\delta\right)\circ\delta$, and
\begin{equation}\label{SwEb2}
\varepsilon\left(m_{(-1)}\right)m_{(0)}m_{(1)}=m\;.
\end{equation}

We also remind that the algebraic description of vector bundles by means of modules (or of sheaves; cf.\ \cite[Chap. II, Exer.~5.7]{H}) is appropriate when the role of the total spaces can be encapsulated in the properties of vector bundle morphisms (in other words when, in order to use vector bundles, to consider them as members of their category basically suffices). In the applications for which some analysis on the total spaces is needed, the algebraic counterpart of bundles gets more complicated (cf.\ \cite[1.1.13]{DV}).

\subsection{The conditions for noncommutative equivariant bundles as homomorphism identities}
Recall that we are keeping the tensor product choice $\K\otimes A=A$, $\K\otimes M=M$, and let us similarly assume $\K\otimes(H\otimes A)=H\otimes A$ (for some choice of $H\otimes A$). Hence $\eta\otimes\id_A:A\to H\otimes A$ is the map $a\mapsto 1\otimes a$ and we can also assume that
\begin{equation}\label{HM}
\left(\eta\otimes\id_A\right)^\ast M=H\otimes M
\end{equation}
(for some choice of $H\otimes M$) with
\[
\left(\eta\otimes\id_A\right)^\ast M=\left(H\otimes A\right)\otimes_{A}M\ni (h\otimes a)\otimes m=h\otimes am\in H\otimes M\;,
\]
so that the natural map $M\to\left(\eta\otimes\id_A\right)^\ast M=H\otimes M$ is $\eta\otimes\id_M$, $m\mapsto 1\otimes m$. With these assumptions, $\Theta$ is a left $(H\otimes A)$--module homomorphism \[\left(\eta\otimes\id_A\right)^\ast M\to\delta^\ast M\;,\] $\theta$ a left module homomorphism \[M\to\delta^\ast M\] over the algebra homomorphism $\eta\otimes\id_A$, and they correspond to each other via $\eta\otimes\id_A$ (\footnote{Using a notation $\varphi_\ast$ for restriction of scalars through an algebra homomorphism $\varphi:A\to B$, we can also consider $\theta$ as a left $A$-module homomorphism $M\to\left(\eta\otimes\id_A\right)_\ast\delta^\ast M$.}).
Since
\[
\left(\eta\otimes\id_{H\otimes A}\right)\circ\delta=\eta\otimes\delta=\left(\id_H\otimes\delta\right)\circ\left(\eta\otimes\id_A\right)\;,
\]
we can assume that extensions of scalars are chosen so that
\begin{equation}\label{C1}
\left(\eta\otimes\id_{H\otimes A}\right)^\ast\delta^\ast M=\left(\id_H\otimes\delta\right)^\ast\left(\eta\otimes\id_A\right)^\ast M.
\end{equation}
(By writing this, we also tacitly assume that the natural maps of $M$ into that $(H\otimes H\otimes A)$--module, induced via the two compositions, are the same.)
Hence the composition
\[
\left(\id_H\otimes\delta\right)^\ast\Theta\circ\left(\eta\otimes\id_{H\otimes A}\right)^\ast\Theta
\]
makes sense. Moreover, we can choose
\begin{equation}\label{C2}
\left(\eta\otimes\id_{H\otimes A}\right)^\ast\left(\eta\otimes\id_A\right)^\ast M=\left(\Delta\otimes\id_A\right)^\ast\left(\eta\otimes\id_A\right)^\ast M\;,
\end{equation}
and taking into account the first identity in \eqref{Coaction}, also
\begin{equation}\label{C3}
\left(\id_H\otimes\delta\right)^\ast\delta^\ast M=\left(\Delta\otimes\id_A\right)^\ast\delta^\ast M\;.
\end{equation}
With these assumptions we can write the condition \eqref{SwEb1} as
\begin{equation}\label{Eb1}
\left(\id_H\otimes\delta\right)^\ast\Theta\circ\left(\eta\otimes\id_{H\otimes A}\right)^\ast\Theta=\left(\Delta\otimes\id_A\right)^\ast\Theta\;,
\end{equation}
with no involvement of natural isomorphisms such as those in, e.g., \cite[4.5.4]{S}, since they are actually identity maps because of the choices \eqref{C1}, \eqref{C2}, \eqref{C3}. Similarly, we can choose
\begin{equation}\label{C4}
\left(\varepsilon\otimes\id_A\right)^\ast\left(\eta\otimes\id_A\right)^\ast M=M\;(\footnote{Of course, by writing this we also understood that the natural homomorphism \[\left(\eta\otimes\id_A\right)^\ast M\to M\] maps $h\otimes m$ into $\varepsilon(h)m$. This assures, in particular, that the composition of the natural maps $M\to\left(\eta\otimes\id_A\right)^\ast M\to M$ is the natural map $M\to M$ that comes by extension of scalars of $M$ over $\id_A$, that is $\id_M$. A similar assumption for \eqref{HM} was explicitly written, whereas for \eqref{C1}, \eqref{C2} and \eqref{C3} was considered as understood as well.}),
\end{equation}
and, taking into account the second identity in \eqref{Coaction}, also
\begin{equation}\label{Cnew} \left(\varepsilon\otimes\id_A\right)^\ast\delta^\ast M=M\;.
\end{equation}
Then \eqref{SwEb2} is equivalent to
\begin{equation}\label{Eb2}
\left(\varepsilon\otimes\id_A\right)^\ast\Theta=\id_M\;.
\end{equation}
To recognize that \eqref{Eb1} is equivalent to \eqref{SwEb1} it may be helpful to rewrite it as
\begin{equation}\label{Tid}
\left(\id_H\otimes\delta\right)^\ast\Theta\circ\left(\id_H\otimes\Theta\right)=\left(\Delta\otimes\id_A\right)^\ast\Theta\;,
\end{equation}
under the choices
\[
\left(\eta\otimes\id_{H\otimes A}\right)^\ast\left(\eta\otimes\id_A\right)^\ast M=H\otimes\left(\eta\otimes\id_A\right)^\ast M\;,\quad\left(\eta\otimes\id_{H\otimes A}\right)^\ast\delta^\ast M=H\otimes\delta^\ast M\;.
\]

From now on in this paper, along with the standing notation $\K$, $A$, $M$, $H$, $\eta$, $\varepsilon$, $\Delta$, $\mu$, $\theta$, $\Theta$ introduced in \autoref{Standing},
\begin{itemize}
\item we shall always consider \eqref{HM} as implicitly assumed;
\item the condition \eqref{Eb1} will be always understood under the assumptions \eqref{C1}, \eqref{C2} and \eqref{C3};
\item the condition \eqref{Eb2} will be always understood under the assumptions \eqref{C4} and \eqref{Cnew}.
\end{itemize}

To summarize, homomorphisms $\Theta$ for which $\theta$ satisfies \eqref{SwEb1} and \eqref{SwEb2} (the main subject of the present work), are the $\Theta$s that satisfies \eqref{Eb1} and \eqref{Eb2}.

\subsection{Noncommutative equivariant bundles and Hopf modules}

When $\Theta$ is an isomorphism, $\Theta^{-1}$ corresponds via $\delta$ to a module homomorphism over $\delta$:
\[
\overline{\delta}:M\to\left(\eta\otimes\id_A\right)^\ast M\overset{\eqref{HM}}{=}H\otimes M\;.
\]
Below we shall explicitly show that \eqref{Eb1} and \eqref{Eb2} correspond to the coaction condition on $\overline\delta$. To this end, for the reader convenience we preliminarily state an elementary result.

\begin{prop}\label{Elext}
Let $\varphi:A\to B$, $\psi:B\to C$ be $\K$-algebra homomorphisms, $\overline\varphi:M\to N$ a left module homomorphism over $\varphi$, $\overline\psi:N\to P$ a left module homomorphism over $\psi$, $f:\varphi^\ast M\to N$ the left $B$-module homomorphism corresponding to $\overline\varphi$ via $\varphi$ and $g:\psi^\ast N\to P$ the left $C$-module homomorphism corresponding to $\overline\psi$ via $\psi$. Assuming \[\left(\psi\circ\varphi\right)^\ast M=\psi^\ast\varphi^\ast M\;,\] we have that $g\circ\psi^\ast f$ and $\overline\psi\circ\overline\varphi$ correspond to each other via $\psi\circ\varphi$.
\end{prop}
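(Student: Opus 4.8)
The plan is to reduce the statement to the universal property of extension of scalars together with a one-line diagram chase. First I would record the two easy structural facts. On one side, $\overline\psi\circ\overline\varphi$ is a left module homomorphism over $\psi\circ\varphi$: for $a\in A$ and $m\in M$,
\[
(\overline\psi\circ\overline\varphi)(am)=\overline\psi\bigl(\varphi(a)\,\overline\varphi(m)\bigr)=\psi(\varphi(a))\,(\overline\psi\circ\overline\varphi)(m)=(\psi\circ\varphi)(a)\,(\overline\psi\circ\overline\varphi)(m).
\]
On the other side, $g\circ\psi^\ast f$ is a left $C$-module homomorphism, since $\psi^\ast f$ is $C$-linear by construction and $g$ is $C$-linear; under the hypothesis $(\psi\circ\varphi)^\ast M=\psi^\ast\varphi^\ast M$ its source is $(\psi\circ\varphi)^\ast M$ and its target is $P$.

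By the universal property of extension of scalars along $\psi\circ\varphi$, there is exactly one left $C$-module homomorphism $(\psi\circ\varphi)^\ast M\to P$ whose composite with the natural map $\nu:M\to(\psi\circ\varphi)^\ast M$ equals $\overline\psi\circ\overline\varphi$, and this is by definition the homomorphism corresponding to $\overline\psi\circ\overline\varphi$ via $\psi\circ\varphi$. Since $g\circ\psi^\ast f$ is already a $C$-module homomorphism of the correct source and target, it suffices to verify the single identity
\[
(g\circ\psi^\ast f)\circ\nu=\overline\psi\circ\overline\varphi\;;
\]
uniqueness then identifies $g\circ\psi^\ast f$ with the required corresponding homomorphism.

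For this identity I would chase natural maps. Under the identification $(\psi\circ\varphi)^\ast M=\psi^\ast\varphi^\ast M$, the accompanying convention on natural maps factors $\nu$ as the natural map $M\to\varphi^\ast M$ followed by the natural map $\varphi^\ast M\to\psi^\ast\varphi^\ast M$. The defining property of $\psi^\ast f$, namely that $\psi^\ast f$ composed with the natural map $\varphi^\ast M\to\psi^\ast\varphi^\ast M$ equals the natural map $N\to\psi^\ast N$ composed with $f$, lets me rewrite $(g\circ\psi^\ast f)\circ\nu$ as $g$ composed with the natural map $N\to\psi^\ast N$, composed with $f$, composed with the natural map $M\to\varphi^\ast M$. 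The defining property of $g$ then collapses $g$ composed with the natural map $N\to\psi^\ast N$ into $\overline\psi$, and the defining property of $f$ collapses $f$ composed with the natural map $M\to\varphi^\ast M$ into $\overline\varphi$, leaving $\overline\psi\circ\overline\varphi$, as required.

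The computation is entirely formal, so I do not anticipate a genuine obstacle. The one point deserving care is the compatibility of the natural maps: under the identification $(\psi\circ\varphi)^\ast M=\psi^\ast\varphi^\ast M$ the natural map of $M$ into this module must coincide with the stated composite. This is precisely the sort of tacit convention on natural maps that the authors flag earlier (cf.\ the parenthetical remark following \eqref{C1}), and invoking it correctly is the only step that calls for attention rather than mechanical bookkeeping.
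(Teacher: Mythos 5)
Your proposal is correct and follows essentially the same route as the paper's proof: both reduce the claim to the factorization of the natural map $M\to(\psi\circ\varphi)^\ast M$ through $M\to\varphi^\ast M\to\psi^\ast\varphi^\ast M$, apply the defining property of $\psi^\ast f$, and then collapse via the defining properties of $f$ and $g$. The paper writes the natural maps explicitly as $\varphi\otimes_A\id_M$ and $\psi\otimes_B\id_N$ and presents the chase as a single chain of equalities, but the content, including the caveat about compatibility of the natural maps under the identification $\psi^\ast\varphi^\ast M=(\psi\circ\varphi)^\ast M$, is the same.
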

\begin{proof}
Assuming $A\otimes_AM=M$, $B\otimes_BN=N$, the natural homomorphisms $M\to\varphi^\ast M$ and $N\to\psi^\ast N$ are $\varphi\otimes_A\id_M$ and $\psi\otimes_B\id_N$, respectively. By definition of $f$ and $g$, we have
\[
\overline\varphi=f\circ\left(\varphi\otimes_A\id_M\right)\;,\qquad\overline\psi=g\circ\left(\psi\otimes_B\id_N\right)\;.
\]
Since we are assuming $\psi^\ast\varphi^\ast M=\left(\psi\circ\varphi\right)^\ast M$ and under the choice $B\otimes_B\varphi^\ast M=\varphi^\ast M$ as well, we can also write
\[
\left(\psi\otimes_B\id_{\varphi^\ast M}\right)\circ\left(\varphi\otimes_A\id_M\right)=\left(\psi\circ\varphi\right)\otimes_A\id_M\;.
\]
Moreover, $\psi^\ast f$ is the unique $C$-module homomorphism such that
\[
\psi^\ast f\circ\left(\psi\otimes_B\id_{\varphi^\ast M}\right)=\left(\psi\otimes_B\id_N\right)\circ f\;.
\]
Hence we have
\begin{multline*}
\overline\psi\circ\overline\varphi=g\circ\left(\psi\otimes_B\id_N\right)\circ f\circ\left(\varphi\otimes_A\id_M\right)\\
=g\circ\psi^\ast f\circ\left(\psi\otimes_B\id_{\varphi^\ast M}\right)\circ\left(\varphi\otimes_A\id_M\right)=g\circ\psi^\ast f\circ\left(\left(\psi\circ\varphi\right)\otimes_A\id_M\right)\;,
\end{multline*}
which precisely says that $g\circ\psi^\ast f$ and $\overline\psi\circ\overline\varphi$ correspond to each other via $\psi\circ\varphi$.
\end{proof}

\begin{prop}\label{Rho1}
If
\[
\rho:\delta^\ast M\to\left(\eta\otimes\id_A\right)^\ast M
\] 
is a left $(H\otimes A)$--module homomorphism and
\[
\overline{\delta}:M\to\left(\eta\otimes\id_A\right)^\ast M\overset{\eqref{HM}}{=}H\otimes M
\]
is the module homomorphism over $\delta$ corresponding to $\rho$ via $\delta$ then, under the assumptions \eqref{C1}, \eqref{C2}, \eqref{C3} and $(H\otimes H)\otimes M=H\otimes(H\otimes M)=:H\otimes H\otimes M$, we have
\begin{multline}\label{Lhs}
\left(\Delta\otimes\id_A\right)^\ast\rho=\left(\eta\otimes\id_{H\otimes A}\right)^\ast\rho\circ\left(\id_H\otimes\delta\right)^\ast\rho\\
\iff\left(\Delta\otimes\id_M\right)\circ\overline{\delta}=\left(\id_H\otimes\overline{\delta}\right)\circ\overline{\delta}\;.
\end{multline}
\end{prop}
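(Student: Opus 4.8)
\emph{Proof plan.} The plan is to transport the equality between the two left $(H\otimes H\otimes A)$-module homomorphisms on the left of \eqref{Lhs} across the adjunction of extension and restriction of scalars, turning it into the stated equality between the two homomorphisms $M\to H\otimes H\otimes M$ on the right. First I would observe that, thanks to \eqref{C1}, \eqref{C2} and \eqref{C3}, both $\left(\Delta\otimes\id_A\right)^\ast\rho$ and $\left(\eta\otimes\id_{H\otimes A}\right)^\ast\rho\circ\left(\id_H\otimes\delta\right)^\ast\rho$ are well defined left $(H\otimes H\otimes A)$-module homomorphisms sharing the domain $\gamma^\ast M=\left(\Delta\otimes\id_A\right)^\ast\delta^\ast M$ and the codomain $\left(\Delta\otimes\id_A\right)^\ast\left(\eta\otimes\id_A\right)^\ast M$, where $\gamma=\left(\Delta\otimes\id_A\right)\circ\delta$. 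Since the correspondence via $\gamma$ between such homomorphisms and module homomorphisms $M\to\left(\Delta\otimes\id_A\right)^\ast\left(\eta\otimes\id_A\right)^\ast M$ over $\gamma$ is a bijection (the universal property recalled in the preceding subsection), the two homomorphisms coincide if and only if their counterparts over $\gamma$, obtained by precomposition with the natural map $\nu_\gamma\colon M\to\gamma^\ast M$, coincide. Thus it suffices to compute these two counterparts and to recognize them as $\left(\Delta\otimes\id_M\right)\circ\overline\delta$ and $\left(\id_H\otimes\overline\delta\right)\circ\overline\delta$.

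For the left-hand homomorphism I would apply \autoref{Elext} with $\varphi=\delta$, $\psi=\Delta\otimes\id_A$, $f=\rho$ (so that $\overline\varphi=\overline\delta$), and with $\overline\psi$ taken to be the natural map into $\psi^\ast N$, so that the corresponding $g$ is the identity. The proposition then yields that the counterpart over $\gamma$ of $\left(\Delta\otimes\id_A\right)^\ast\rho$ is $\nu_{\Delta\otimes\id_A}\circ\overline\delta$, where $\nu_{\Delta\otimes\id_A}$ is the natural map $\left(\eta\otimes\id_A\right)^\ast M\to\left(\Delta\otimes\id_A\right)^\ast\left(\eta\otimes\id_A\right)^\ast M$. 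It remains to check that, under \eqref{HM} and \eqref{C2}, this natural map is exactly $\Delta\otimes\id_M$. I would verify this by evaluating on a generator $h\otimes m$: writing $h\otimes m=\left(h\otimes 1\right)\left(1\otimes m\right)$ and using that $\nu_{\Delta\otimes\id_A}$ is a homomorphism over $\Delta\otimes\id_A$ reduces the computation to its value at $1\otimes m$, which the coherence tacitly assumed in \eqref{C2} forces to be $1\otimes 1\otimes m$; the left $(H\otimes H\otimes A)$-action then produces $h_{(1)}\otimes h_{(2)}\otimes m$.

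For the right-hand homomorphism I would invoke \autoref{Elext} again, now with $\varphi=\delta$, $\psi=\id_H\otimes\delta$, $f=\rho$ and $g=\left(\eta\otimes\id_{H\otimes A}\right)^\ast\rho$, whose counterpart over $\id_H\otimes\delta$ I call $\overline\psi$; the proposition gives that the counterpart over $\gamma$ of the composition is $\overline\psi\circ\overline\delta$. The crux is then to recognize $\overline\psi=\id_H\otimes\overline\delta$. With the natural choices fixing $\left(\eta\otimes\id_{H\otimes A}\right)^\ast(-)=H\otimes(-)$, extension of scalars along $\eta\otimes\id_{H\otimes A}$ is just $\id_H\otimes(-)$, so that $g=\id_H\otimes\rho$; and the coherence tacitly assumed in \eqref{C1} (that the two composite natural maps of $M$ into $\left(\eta\otimes\id_{H\otimes A}\right)^\ast\delta^\ast M=\left(\id_H\otimes\delta\right)^\ast\left(\eta\otimes\id_A\right)^\ast M$ agree) identifies the natural map $\nu_{\id_H\otimes\delta}$ with $\id_H\otimes\nu_\delta$. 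Composing, $\overline\psi=g\circ\nu_{\id_H\otimes\delta}=\left(\id_H\otimes\rho\right)\circ\left(\id_H\otimes\nu_\delta\right)=\id_H\otimes\left(\rho\circ\nu_\delta\right)=\id_H\otimes\overline\delta$, using the defining relation $\rho\circ\nu_\delta=\overline\delta$.

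I expect the main obstacle to be precisely these two identifications of natural maps, namely $\nu_{\Delta\otimes\id_A}=\Delta\otimes\id_M$ and $\nu_{\id_H\otimes\delta}=\id_H\otimes\nu_\delta$ (whence $\overline\psi=\id_H\otimes\overline\delta$). They are not deep, but they require careful bookkeeping of the balanced maps implicit in the iterated tensor products and of the chosen equalities \eqref{C1}, \eqref{C2}, \eqref{C3}: one must track consistently how each $(H\otimes H\otimes A)$-module is identified with $H\otimes H\otimes M$, and verify on generators that the natural maps coincide with the comultiplication-type maps, using that the relevant composite algebra homomorphisms $A\to H\otimes H\otimes A$ agree. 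Once these identifications are established, combining the two counterparts with the bijectivity from the first step yields the equivalence \eqref{Lhs} at once.
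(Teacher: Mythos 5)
Your proposal is correct and follows essentially the same route as the paper: both reduce the left-hand identity in \eqref{Lhs} to an equality of the corresponding homomorphisms over $\gamma=\left(\Delta\otimes\id_A\right)\circ\delta=\left(\id_H\otimes\delta\right)\circ\delta$ by means of \autoref{Elext} together with \eqref{C1}, \eqref{C2}, \eqref{C3}, and then identify those counterparts with $\left(\Delta\otimes\id_M\right)\circ\overline{\delta}$ and $\left(\id_H\otimes\overline{\delta}\right)\circ\overline{\delta}$. The only (inessential) difference lies in how the two natural-map identifications are justified: the paper pins down $\left(\eta\otimes\id_{H\otimes A}\right)^\ast\rho$ by a second application of \autoref{Elext} through the common composite $\eta\otimes\delta$ and \emph{chooses} $H\otimes H\otimes M$ so that its natural map is $\Delta\otimes\id_M$, whereas you verify the same identifications on generators --- the very bookkeeping you flag as the main obstacle.
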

\begin{proof}
Let
\[
g:\left(\id_H\otimes\delta\right)^\ast\left(H\otimes M\right)\to H\otimes H\otimes M
\]
be the $(H\otimes H\otimes A)$--module homomorphism corresponding to $\id_H\otimes\overline\delta$ via $\id_H\otimes\delta$. Following the standing assumption \eqref{HM}, the natural map $M\to\left(\eta\otimes\id_A\right)^\ast M$ is $\eta\otimes\id_M$  (under the other standing assumption $\K\otimes M=M$). Therefore the $(H\otimes A)$--module homomorphism $\left(\eta\otimes\id_A\right)^\ast M\to H\otimes M$ corresponding to $\eta\otimes\id_M$ via $\eta\otimes\id_A$ is the identity map. According to \autoref{Elext}, under the assumption
\[
\left(\eta\otimes\delta\right)^\ast M=\left(\id_H\otimes\delta\right)^\ast\left(\eta\otimes\id_A\right)^\ast M
\]
$g$ is also the $(H\otimes H\otimes A)$--module homomorphism corresponding to $\left(\id_H\otimes\overline\delta\right)\circ\left(\eta\otimes\id_M\right)=\eta\otimes\overline\delta$ via $\left(\id_H\otimes\delta\right)\circ\left(\eta\otimes\id_A\right)=\eta\otimes\delta$.

Exploiting \autoref{Elext} in a similar way for the composition $\left(\eta\otimes\id_{H\otimes M}\right)\circ\overline\delta=\eta\otimes\overline\delta$, and taking into account \eqref{C1}, we deduce that
\begin{equation}\label{g}
g=\left(\eta\otimes\id_{H\otimes A}\right)^\ast\rho\;.
\end{equation}
Note that $H\otimes H\otimes M$ is an extension of scalars $\left(\Delta\otimes\id_A\right)^\ast\left(H\otimes M\right)$ with natural map $h\otimes m\mapsto\Delta(h)\otimes m$, and since a change in the choice of $H\otimes H\otimes M$ of course does not affect the equality $\left(\Delta\otimes\id_M\right)\circ\overline{\delta}=\left(\id_H\otimes\overline{\delta}\right)\circ\overline{\delta}$, we can assume that
\[
H\otimes H\otimes M=\left(\Delta\otimes\id_A\right)^\ast\left(\eta\otimes\id_A\right)^\ast M\;.
\]
Hence the $\left(H\otimes H\otimes M\right)$--module homomorphism $\left(\Delta\otimes\id_A\right)^\ast\left(\eta\otimes\id_A\right)^\ast M\to H\otimes H\otimes M$ corresponding to $\Delta\otimes\id_M$ via $\Delta\otimes\id_A$, is the identity map. Exploiting \autoref{Elext} like before, we have that
\begin{itemize}
\item $\left(\Delta\otimes\id_A\right)^\ast\rho$ and $\left(\Delta\otimes\id_M\right)\circ\overline{\delta}$ correspond to each other via $\left(\Delta\otimes\id_A\right)\circ\delta$.
\end{itemize}
Similarly, taking into account \eqref{C2} and \eqref{g}, we have that
\begin{itemize}
\item$\left(\eta\otimes\id_{H\otimes A}\right)^\ast\rho\circ\left(\id_H\otimes\delta\right)^\ast\rho$ and $\left(\id_H\otimes\overline{\delta}\right)\circ\overline{\delta}$ correspond to\\ each other via $\left(\id_H\otimes\delta\right)\circ\delta$.
\end{itemize}
Taking into account \eqref{Coaction} and \eqref{C3}, we have \eqref{Lhs}.
\end{proof}

\begin{cor}\label{EbC1}
If $\Theta$ is an isomorphism then
\[
\eqref{Eb1}\iff\left(\Delta\otimes\id_M\right)\circ\overline{\delta}=\left(\id_H\otimes\overline{\delta}\right)\circ\overline{\delta}\;,
\]
under the assumptions \eqref{C1}, \eqref{C2}, \eqref{C3} and $(H\otimes H)\otimes M=H\otimes(H\otimes M)=:H\otimes H\otimes M$.
\end{cor}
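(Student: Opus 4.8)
The plan is to deduce the corollary directly from \autoref{Rho1} by taking $\rho:=\Theta^{-1}$. Since $\Theta$ is assumed to be an isomorphism, this $\rho$ is a well-defined left $(H\otimes A)$--module homomorphism $\delta^\ast M\to\left(\eta\otimes\id_A\right)^\ast M$, and by the very definition of $\overline\delta$ given just above, $\overline\delta$ is precisely the module homomorphism over $\delta$ corresponding to $\rho=\Theta^{-1}$ via $\delta$. Thus the hypotheses of \autoref{Rho1} are satisfied verbatim (the assumptions \eqref{C1}, \eqref{C2}, \eqref{C3} and the chosen identification of $H\otimes H\otimes M$ are exactly those listed in the corollary), and the whole task reduces to identifying the left-hand side of the equivalence \eqref{Lhs}, specialized to $\rho=\Theta^{-1}$, with condition \eqref{Eb1}.

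The key observation I would record first is that extension of scalars is functorial: from the identity $\varphi^\ast g=\id_B\otimes g$ noted in the preliminaries one reads off $\varphi^\ast(\id)=\id$ and $\varphi^\ast\left(g\circ g'\right)=\varphi^\ast g\circ\varphi^\ast g'$ for composable homomorphisms. Consequently $\varphi^\ast$ sends isomorphisms to isomorphisms and satisfies $\varphi^\ast\left(g^{-1}\right)=\left(\varphi^\ast g\right)^{-1}$. Applying this to each of the three algebra homomorphisms $\Delta\otimes\id_A$, $\id_H\otimes\delta$ and $\eta\otimes\id_{H\otimes A}$ gives $\left(\Delta\otimes\id_A\right)^\ast\rho=\left(\left(\Delta\otimes\id_A\right)^\ast\Theta\right)^{-1}$, and likewise for the other two extensions of $\rho=\Theta^{-1}$.

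Substituting these into the left-hand side of \eqref{Lhs} yields
\[
\left(\left(\Delta\otimes\id_A\right)^\ast\Theta\right)^{-1}=\left(\left(\eta\otimes\id_{H\otimes A}\right)^\ast\Theta\right)^{-1}\circ\left(\left(\id_H\otimes\delta\right)^\ast\Theta\right)^{-1}\;.
\]
Taking inverses of both sides and using $\left(f\circ g\right)^{-1}=g^{-1}\circ f^{-1}$ turns this into
\[
\left(\Delta\otimes\id_A\right)^\ast\Theta=\left(\id_H\otimes\delta\right)^\ast\Theta\circ\left(\eta\otimes\id_{H\otimes A}\right)^\ast\Theta\;,
\]
which is exactly \eqref{Eb1}. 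Since inversion is a bijection on the relevant isomorphisms, these two displayed identities hold or fail together, so the left-hand side of \eqref{Lhs} is equivalent to \eqref{Eb1}. Combining this equivalence with \autoref{Rho1} gives the claimed equivalence between \eqref{Eb1} and the coaction condition on $\overline\delta$.

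I do not expect a substantive obstacle here, since the argument is essentially the functoriality of $\varphi^\ast$ together with a careful reversal of composition under inversion. The only point demanding attention is bookkeeping: one must check that replacing $\rho$ by $\Theta^{-1}$ really produces the genuine inverse inside each of the three extension functors (rather than merely a one-sided inverse), that the composition order is reversed correctly when passing to inverses, and that the identifications \eqref{C1}--\eqref{C3} fixed for $\Theta$ are the same ones invoked in \autoref{Rho1}. All of this is routine once the relation $\varphi^\ast\left(g^{-1}\right)=\left(\varphi^\ast g\right)^{-1}$ is in hand.
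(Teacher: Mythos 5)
Your proposal is correct and follows exactly the paper's route: the paper's proof consists of setting $\rho:=\Theta^{-1}$ and declaring the left-hand statement of \eqref{Lhs} ``clearly equivalent'' to \eqref{Eb1}, and your functoriality argument ($\varphi^\ast(g^{-1})=(\varphi^\ast g)^{-1}$ followed by inverting both sides) is precisely the justification of that ``clearly''.
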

\begin{proof}
Setting $\rho:=\Theta^{-1}$, we have that the left-hand statement in \eqref{Lhs} is clearly equivalent to \eqref{Eb1}.
\end{proof}

\begin{prop}\label{Rho2}
If
\[
\rho:\delta^\ast M\to\left(\eta\otimes\id_A\right)^\ast M
\] 
is a left $(H\otimes A)$--module homomorphism and
\[
\overline{\delta}:M\to\left(\eta\otimes\id_A\right)^\ast M\overset{\eqref{HM}}{=}H\otimes M
\]
is the module homomorphism over $\delta$ corresponding to $\rho$ via $\delta$ then, under the assumptions \eqref{C4} and \eqref{Cnew}, we have 
\begin{equation}\label{rho2}
\left(\varepsilon\otimes\id_A\right)^\ast\rho=\left(\varepsilon\otimes\id_M\right)\circ\overline{\delta}\;.
\end{equation}
\end{prop}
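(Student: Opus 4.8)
The plan is to follow the same strategy as the proof of \autoref{Rho1}, but in the simpler situation where only one application of \autoref{Elext} is needed (no composition of two copies of $\rho$ occurs). The starting point is the defining relation of the correspondence between $\rho$ and $\overline{\delta}$ via $\delta$: writing $\nu_\delta\colon M\to\delta^\ast M$ for the natural map $m\mapsto(1\otimes1)\otimes m$, we have $\overline{\delta}=\rho\circ\nu_\delta$. In the language of \autoref{Elext} this says that, taking $\varphi:=\delta\colon A\to H\otimes A$, $N:=\left(\eta\otimes\id_A\right)^\ast M$, $\overline{\varphi}:=\overline{\delta}$ and $f:=\rho$, the homomorphism $f$ corresponds to $\overline{\varphi}$ via $\varphi$.

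Next I would choose the second algebra homomorphism to be $\psi:=\varepsilon\otimes\id_A\colon H\otimes A\to A$, so that $\psi\circ\varphi=\left(\varepsilon\otimes\id_A\right)\circ\delta=\id_A$ by the second identity in \eqref{Coaction}, and I would take $\overline{\psi}:=\varepsilon\otimes\id_M\colon H\otimes M\to M$ with $P:=M$. The first thing to verify is that $\overline{\psi}$ is a left module homomorphism over $\psi$: using that the left $(H\otimes A)$-action on $N=H\otimes M$ induced by \eqref{HM} is $\left(h'\otimes a'\right)(h\otimes m)=h'h\otimes a'm$, a direct computation relying on $\varepsilon(h'h)=\varepsilon(h')\varepsilon(h)$ gives $\overline{\psi}\bigl(\left(h'\otimes a'\right)(h\otimes m)\bigr)=\psi\left(h'\otimes a'\right)\,\overline{\psi}(h\otimes m)$, as required.

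The key step is to identify the homomorphism $g\colon\psi^\ast N\to P$ corresponding to $\overline{\psi}$ via $\psi$. By assumption \eqref{C4} we have $\psi^\ast N=\left(\varepsilon\otimes\id_A\right)^\ast\left(\eta\otimes\id_A\right)^\ast M=M$, and the associated natural map $N=H\otimes M\to M$ is precisely $h\otimes m\mapsto\varepsilon(h)m$, that is, it equals $\overline{\psi}$ itself. Since $g$ is the unique $A$-module homomorphism with $g\circ\left(\varepsilon\otimes\id_M\right)=\overline{\psi}=\varepsilon\otimes\id_M$ and $\varepsilon\otimes\id_M$ is surjective (as $\varepsilon$ is a counit), I conclude $g=\id_M$. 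This is the point where the conventions built into \eqref{C4} are indispensable, and I expect it to be the only genuine subtlety in the argument.

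Finally I would apply \autoref{Elext}. Its hypothesis $\left(\psi\circ\varphi\right)^\ast M=\psi^\ast\varphi^\ast M$ reads $M=\left(\varepsilon\otimes\id_A\right)^\ast\delta^\ast M$, which is exactly \eqref{Cnew}. The conclusion is that $g\circ\psi^\ast f$ and $\overline{\psi}\circ\overline{\varphi}$ correspond to each other via $\psi\circ\varphi=\id_A$. Now $g\circ\psi^\ast f=\id_M\circ\left(\varepsilon\otimes\id_A\right)^\ast\rho=\left(\varepsilon\otimes\id_A\right)^\ast\rho$ and $\overline{\psi}\circ\overline{\varphi}=\left(\varepsilon\otimes\id_M\right)\circ\overline{\delta}$; since correspondence via $\id_A$ amounts to plain equality (the natural map $\id_A^\ast M\to M$ being $\id_M$), this yields \eqref{rho2}, namely $\left(\varepsilon\otimes\id_A\right)^\ast\rho=\left(\varepsilon\otimes\id_M\right)\circ\overline{\delta}$.
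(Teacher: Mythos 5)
Your proposal is correct and follows essentially the same route as the paper: both reduce \eqref{rho2} to a single application of \autoref{Elext} for the composition $\left(\varepsilon\otimes\id_M\right)\circ\overline{\delta}$ over $\left(\varepsilon\otimes\id_A\right)\circ\delta=\id_A$, after first identifying the homomorphism $g$ corresponding to $\varepsilon\otimes\id_M$ via $\varepsilon\otimes\id_A$ as $\id_M$ using \eqref{C4}. The only (inessential) difference is that you obtain $g=\id_M$ directly from the convention on the natural map built into \eqref{C4} (plus surjectivity, though uniqueness alone would suffice), whereas the paper derives it by a second application of \autoref{Elext} to the composition $\left(\varepsilon\otimes\id_M\right)\circ\left(\eta\otimes\id_M\right)=\id_M$.
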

\begin{proof}
Taking into account \eqref{C4}, let $g:M\to M$ be the homomorphism corresponding to $\varepsilon\otimes\id_M$ via $\varepsilon\otimes\id_A$. By \autoref{Elext} we have that $g$ and $\left(\varepsilon\otimes\id_M\right)\circ\left(\eta\otimes\id_M\right)=\id_M$ correspond to each other via $\id_A$. Hence $g=\id_M$, because $\id_M$ obviously correspond to itself via $\id_A$.

Again by \autoref{Elext}, taking into account \eqref{Cnew}, we have that $\id_M\circ\left(\varepsilon\otimes\id_A\right)^\ast\rho$ and $\left(\varepsilon\otimes\id_M\right)\circ\overline{\delta}$ correspond to each other via $\left(\varepsilon\otimes\id_A\right)\circ\delta=\id_A$. This leads to \eqref{rho2}, because $\left(\varepsilon\otimes\id_M\right)\circ\overline{\delta}$ correspond to itself via $\id_A$.
\end{proof}

The following consequence is immediate.

\begin{cor}\label{EbC2}
If $\Theta$ is an isomorphism then
\[
\eqref{Eb2}\iff\left(\varepsilon\otimes\id_M\right)\circ\overline{\delta}=\id_M\;,
\]
under the assumptions \eqref{C4} and \eqref{Cnew}.
\end{cor}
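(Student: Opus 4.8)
The plan is to derive this corollary from \autoref{Rho2} in the same manner as \autoref{EbC1} was derived from \autoref{Rho1}, relying on the functoriality of extension of scalars. First I would set $\rho:=\Theta^{-1}$, so that the homomorphism $\overline\delta$ over $\delta$ associated with $\Theta^{-1}$ via $\delta$ is exactly the one appearing in the hypotheses of \autoref{Rho2}, and the conclusion \eqref{rho2} of that proposition becomes available under the assumptions \eqref{C4} and \eqref{Cnew}.

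The decisive step is that $(\varepsilon\otimes\id_A)^\ast$ preserves composition and identities, being an extension-of-scalars functor. Its value on $\Theta$ and on $\rho=\Theta^{-1}$ therefore forms a mutually inverse pair; moreover, since the domain $(\eta\otimes\id_A)^\ast M$ and codomain $\delta^\ast M$ of $\Theta$ are both identified with $M$ under \eqref{C4} and \eqref{Cnew}, these values are endomorphisms of $M$. Hence \eqref{Eb2}, which reads $(\varepsilon\otimes\id_A)^\ast\Theta=\id_M$, holds if and only if its inverse $(\varepsilon\otimes\id_A)^\ast\rho$ equals $\id_M$. Invoking \autoref{Rho2} to replace $(\varepsilon\otimes\id_A)^\ast\rho$ by $(\varepsilon\otimes\id_M)\circ\overline\delta$, this last condition is precisely $(\varepsilon\otimes\id_M)\circ\overline\delta=\id_M$, which is the claimed equivalence.

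I expect no genuine obstacle, since the result is indeed immediate once \autoref{Rho2} is in hand; the only point deserving a moment's attention is verifying the identifications just used, namely that \eqref{C4} and \eqref{Cnew} turn both $(\varepsilon\otimes\id_A)^\ast\Theta$ and $(\varepsilon\otimes\id_A)^\ast\rho$ into genuine endomorphisms of $M$, so that speaking of mutual inverses, with $\id_M$ being self-inverse, is legitimate.
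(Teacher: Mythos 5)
Your proposal is correct and follows exactly the route the paper intends: the paper labels the corollary an ``immediate'' consequence of \autoref{Rho2}, and the argument it has in mind is precisely yours --- set $\rho:=\Theta^{-1}$, use functoriality of $\left(\varepsilon\otimes\id_A\right)^\ast$ (which follows from the uniqueness clause in the universal property of extension of scalars) to see that $\left(\varepsilon\otimes\id_A\right)^\ast\Theta$ and $\left(\varepsilon\otimes\id_A\right)^\ast\rho$ are mutually inverse endomorphisms of $M$ under \eqref{C4} and \eqref{Cnew}, and then substitute \eqref{rho2}. Your closing remark about checking that the identifications really make both extended maps endomorphisms of $M$ is exactly the right point of care, and it is settled by \eqref{C4} and \eqref{Cnew} as you say.
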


According to Corollaries \ref{EbC1} and \ref{EbC2}, if $\Theta$ is invertible and satisfies \eqref{Eb1} and \eqref{Eb2}, then the homomorphism $\overline{\delta}$ corresponding to $\Theta^{-1}$ via $\delta$ is a coaction (over $\delta$). Thus we have a left-left relative $\left(A,H\right)$--Hopf module (according to the definition in \cite[4.4.1]{S}). Conversely, given a left-left relative $\left(A,H\right)$--Hopf module, that is, a coaction $\overline\delta:M\to H\otimes M$ over $\delta$, if the corresponding $(H\otimes A)$--module homomorphism
\[
\rho:\delta^\ast M\to H\otimes M
\]
is an isomorphism then, according to Corollaries \ref{EbC1} and~\ref{EbC2}, we get an isomorphism $\Theta:=\rho^{-1}$ that satisfies \eqref{Eb1} and \eqref{Eb2}. Moreover, we already pointed out that $\Theta$ satisfies \eqref{Eb1} and \eqref{Eb2} if and only if the corresponding homomorphism $\theta$ satisfies \eqref{SwEb1} and \eqref{SwEb2}.

In conclusion, the description of noncommutative equivariant bundles by means of (projective and finitely generated) relative Hopf modules agrees with the description given by a $\theta$ that satisfies \eqref{SwEb1} and \eqref{SwEb2}, when the corresponding $\Theta$ (that satisfies \eqref{Eb1} and \eqref{Eb2}) is an isomorphism.

\subsection{The case of commutative Hopf algebras}
Here we explicitly show that for commutative Hopf algebras $H$ (but still for arbitrary algebras $A$), a homomorphism $\Theta$ that satisfies \eqref{Eb1} and \eqref{Eb2} is always an isomorphism. In the following lemma we introduce an auxiliary map.

\begin{lemma}\label{Isigma}
When $H$ is a Hopf algebra with antipode $S:H\to H$, if we define
\[
\sigma:=\left(\mu\otimes\id_A\right)\circ\left(S\otimes\delta\right):H\otimes A\to H\otimes A\;,
\]
then we have
\begin{equation}\label{Inv}
\sigma\circ\delta=\eta\otimes\id_A\;,\qquad\sigma\circ\left(\eta\otimes\id_A\right)=\delta\;.
\end{equation}
\end{lemma}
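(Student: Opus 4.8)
The plan is to verify the two identities in \eqref{Inv} separately by direct computation in the Sweedler notation $\delta(a)=a_{(-1)}\otimes a_{(0)}$ and $\Delta(h)=h_{(1)}\otimes h_{(2)}$, using only coassociativity and counitality of the coaction (the two equations in \eqref{Coaction}), together with the defining antipode identity $\mu\circ(S\otimes\id_H)\circ\Delta=\eta\circ\varepsilon$ and the standard consequence $S(1)=1$. I would dispose of the second identity first, since it is the easy one: evaluating $\sigma\circ(\eta\otimes\id_A)$ on $a\in A$ gives $(\mu\otimes\id_A)(S(1)\otimes\delta(a))=(\mu\otimes\id_A)(1\otimes a_{(-1)}\otimes a_{(0)})=a_{(-1)}\otimes a_{(0)}=\delta(a)$, where the only inputs are $S(1)=1$ and the fact that $1$ is a two-sided unit for $\mu$.

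For the first identity I would compute $\sigma\circ\delta$ on $a$. Unfolding the definition, $(S\otimes\delta)(\delta(a))$ amounts to applying $\delta$ to the second leg of $\delta(a)$ and $S$ to the first, i.e.\ it is the image of $(\id_H\otimes\delta)(\delta(a))$ under $S\otimes\id_H\otimes\id_A$. This is the one place where care is needed: I would rewrite $(\id_H\otimes\delta)\circ\delta$ as $(\Delta\otimes\id_A)\circ\delta$ via coassociativity (the first equation in \eqref{Coaction}), so that the expression becomes $S(a_{(-1)(1)})\otimes a_{(-1)(2)}\otimes a_{(0)}$, with the two legs of $\Delta$ now aligned for the antipode to act. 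Applying $\mu\otimes\id_A$ gives $S(a_{(-1)(1)})\,a_{(-1)(2)}\otimes a_{(0)}$, and the antipode identity collapses the first tensor factor to $\varepsilon(a_{(-1)})1$. Finally, counitality of $\delta$ (the second equation in \eqref{Coaction}) yields $1\otimes\varepsilon(a_{(-1)})a_{(0)}=1\otimes a=(\eta\otimes\id_A)(a)$, as desired. Note that commutativity of $H$ is never invoked, so the lemma holds for an arbitrary Hopf algebra.

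The calculation is entirely routine once the tensor legs are correctly lined up; the only genuine subtlety — and the step I would watch most carefully — is carrying the antipode $S$ through the coassociativity rewrite, since this mixes the comultiplication legs $h_{(1)},h_{(2)}$ with the coaction legs $a_{(-1)},a_{(0)}$, precisely the kind of combined Sweedler bookkeeping against which the standing notation cautions. To keep the argument transparent I would, if anything, prefer to present both verifications as equalities of composite maps, invoking \eqref{Coaction} and the antipode identity at the level of morphisms rather than chasing indices.
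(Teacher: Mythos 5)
Your argument is correct and coincides with the paper's own proof: the paper verifies both identities at the level of composite maps using coassociativity, counitality, the antipode identity $\mu\circ(S\otimes\id_H)\circ\Delta=\eta\circ\varepsilon$ and $S\circ\eta=\eta$, and then records exactly your Sweedler-notation computation $S\left(a_{(-1)}\right)a_{(0)(-1)}\otimes a_{(0)(0)}=S\left(a_{(-1)(1)}\right)a_{(-1)(2)}\otimes a_{(0)}=\varepsilon\left(a_{(-1)}\right)\otimes a_{(0)}=1\otimes a$ as an alternative. Your closing preference for presenting the verification as morphism identities is precisely the paper's primary presentation, so there is nothing to add.
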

\begin{proof}
We have
\begin{multline*}
\sigma\circ\delta=\left(\mu\otimes\id_A\right)\circ\left(S\otimes\id_H\otimes\id_A\right)\circ\left(\id_H\otimes\delta\right)\circ\delta\\
\overset{\eqref{Coaction}}{=}\left(\mu\otimes\id_A\right)\circ\left(S\otimes\id_H\otimes\id_A\right)\circ\left(\Delta\otimes\id_A\right)\circ\delta=\left(\left(\mu\circ\left(S\otimes\id_H\right)\circ\Delta\right)\otimes\id_A\right)\circ\delta\\
=\left(\left(\eta\circ\varepsilon\right)\otimes\id_A\right)\circ\delta=\left(\eta\otimes\id_A\right)\circ\left(\varepsilon\otimes\id_A\right)\circ\delta\overset{\eqref{Coaction}}{=}\eta\otimes\id_A\;.
\end{multline*}
Taking into account that $S\circ\eta=\eta$, using the assumption $\K\otimes\left(H\otimes A\right)=H\otimes A$ we have
\begin{multline*}
\sigma\circ\left(\eta\otimes\id_A\right)=\left(\mu\otimes\id_A\right)\circ\left(S\otimes\delta\right)\circ\left(\eta\otimes\id_A\right)=\left(\mu\otimes\id_A\right)\circ\left(\eta\otimes\delta\right)\\
=\left(\mu\otimes\id_A\right)\circ\left(\eta\otimes\id_H\otimes\id_A\right)\circ\left(\id_{\K}\otimes\delta\right)=\left(\left(\mu\circ\left(\eta\otimes\id_H\right)\right)\otimes\id_A\right)\circ\delta\\
=\left(\id_H\otimes\id_A\right)\circ\delta=\delta\;.
\end{multline*}
as required.

Alternatively, in (sumless) Sweedler's notation, the above calculations can be expressed more concisely:
we have $\sigma(h\otimes a)=S(h)a_{(-1)}\otimes a_{(0)}$, hence
\begin{multline*}
\sigma\left(a_{(-1)}\otimes a_{(0)}\right)=S\left(a_{(-1)}\right)a_{(0)(-1)}\otimes a_{(0)(0)}=S\left(a_{(-1)(1)}\right)a_{(-1)(2)}\otimes a_{(0)}\\
=\varepsilon\left(a_{(-1)}\right)\otimes a_{(0)}=1\otimes a
\end{multline*}
(the latter equality is the second identity in \eqref{Coaction}, written in Sweedler's notation and taking into account that $\sigma(\delta(a))$ is an element of $H\otimes A\supseteq\K\otimes A=A$) and
\[
\sigma(1\otimes a)=S(1)a_{(-1)}\otimes a_{(0)}=a_{(-1)}\otimes a_{(0)}\;.
\]
\end{proof}

Note that the somewhat involutive identities \eqref{Inv} hold without assuming that $H$ is commutative, nor that $S$ is involutive. In the following proposition we need commutativity of $H$ to get that $S$ and $\mu$, and hence $\sigma$, are algebra homomorphisms.

\begin{prop}\label{Comm}
If $H$ is a commutative Hopf algebra, then every homomor\-phism that satisfies \eqref{Eb1} and \eqref{Eb2} is an isomorphism.
\end{prop}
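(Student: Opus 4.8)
The plan is to construct an explicit two-sided inverse of $\Theta$ out of the map $\sigma$ of \autoref{Isigma}. The feature available only for commutative $H$ is that both $S$ and $\mu$ are then algebra homomorphisms, so that $\sigma=(\mu\otimes\id_A)\circ(S\otimes\delta)$ is an algebra endomorphism of $H\otimes A$; consequently extension of scalars $\sigma^\ast$ applies to module homomorphisms. First I would set $\rho:=\sigma^\ast\Theta$. Reading the identities \eqref{Inv} as $\sigma\circ\delta=\eta\otimes\id_A$ and $\sigma\circ(\eta\otimes\id_A)=\delta$, under the standing identification conventions they give $\sigma^\ast\delta^\ast M=(\eta\otimes\id_A)^\ast M$ and $\sigma^\ast(\eta\otimes\id_A)^\ast M=\delta^\ast M$, so that $\rho$ is a left $(H\otimes A)$--module homomorphism $\delta^\ast M\to(\eta\otimes\id_A)^\ast M$, i.e.\ a candidate inverse running in the opposite direction to $\Theta$.

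The heart of the proof is to deduce $\rho\circ\Theta=\id$ and $\Theta\circ\rho=\id$ from the cocycle condition \eqref{Eb1} by pulling it back along two auxiliary algebra homomorphisms $p,q\colon H\otimes H\otimes A\to H\otimes A$. For the first identity I would use $p$ characterized by $p\circ(\eta\otimes\id_{H\otimes A})=\id_{H\otimes A}$, $p\circ(\id_H\otimes\delta)=\sigma$ and $p\circ(\Delta\otimes\id_A)=(\eta\otimes\id_A)\circ(\varepsilon\otimes\id_A)$. Applying $p^\ast$ to \eqref{Eb1} and using that extension of scalars is functorial (so $p^\ast\circ\varphi^\ast=(p\circ\varphi)^\ast$, it preserves composition, and it sends identity base maps to identity homomorphisms), the left-hand side becomes $\sigma^\ast\Theta\circ\Theta=\rho\circ\Theta$, while by \eqref{Eb2}, i.e.\ $(\varepsilon\otimes\id_A)^\ast\Theta=\id_M$, the right-hand side becomes $(\eta\otimes\id_A)^\ast\id_M=\id$. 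For the second identity I would use $q$ characterized by $q\circ(\id_H\otimes\delta)=\id_{H\otimes A}$, $q\circ(\eta\otimes\id_{H\otimes A})=\sigma$ and $q\circ(\Delta\otimes\id_A)=\delta\circ(\varepsilon\otimes\id_A)$; then $q^\ast$ turns the left-hand side of \eqref{Eb1} into $\Theta\circ\rho$ and the right-hand side into $\delta^\ast\id_M=\id$, giving $\Theta\circ\rho=\id$.

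The three relations required of $p$ and $q$ are routine consequences of the antipode axiom $\mu\circ(S\otimes\id_H)\circ\Delta=\eta\circ\varepsilon$, the coaction axioms \eqref{Coaction}, and the definition of $\sigma$; concretely one may take $p=(\mu\otimes\id_A)\circ(S\otimes\id_H\otimes\id_A)$ and $q=(\mu\otimes\id_A)\circ(\mu\otimes\id_H\otimes\id_A)\circ(\id_H\otimes S\otimes\delta)$, and it is precisely the commutativity of $H$ that makes these maps algebra homomorphisms (exactly as it does for $\sigma$). Once both composites are shown to be identities, $\Theta$ is invertible with inverse $\rho$, which is the assertion.

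I expect the main obstacle to be not the Hopf-algebraic identities, which are short, but the bookkeeping of the iterated extensions of scalars: one must pin down the various identifications (in the spirit of \eqref{C1}--\eqref{Cnew}, now also for the triple tensor products and for the maps $p$, $q$) so that the functoriality $p^\ast\circ\varphi^\ast=(p\circ\varphi)^\ast$ and the matchings of domains and codomains hold strictly as equalities rather than merely up to canonical isomorphism, making every composition literally well defined.
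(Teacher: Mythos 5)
Your proposal is correct and follows the same core strategy as the paper's proof: define $\rho:=\sigma^\ast\Theta$ using the map $\sigma$ of \autoref{Isigma} (legitimate because commutativity of $H$ makes $\sigma$ an algebra homomorphism), and obtain $\rho\circ\Theta=\id_{H\otimes M}$ by extending scalars in \eqref{Eb1} along your $p=(\mu\otimes\id_A)\circ(S\otimes\id_H\otimes\id_A)$, which is exactly the map $\tau$ used in the paper, and then invoking \eqref{Eb2}. The only divergence is in how the second composite is handled: the paper simply applies $\sigma^\ast$ to the already-established identity $\rho\circ\Theta=\id_{H\otimes M}$ to get $\left(\sigma^\ast\rho\right)\circ\rho=\id_{\delta^\ast M}$, so that $\rho$ has a one-sided inverse on each side and is therefore invertible; you instead pull back \eqref{Eb1} a second time along the auxiliary algebra homomorphism $q$ with $q\left(h\otimes h'\otimes a\right)=hS\!\left(h'\right)a_{(-1)}\otimes a_{(0)}$, whose three required composition identities do hold (they follow from the antipode axiom and \eqref{Coaction}, exactly as you say), yielding $\Theta\circ\rho=\id_{\delta^\ast M}$ directly. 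Both routes are valid; the paper's is marginally more economical, needing only one auxiliary map beyond $\sigma$, while yours has the small advantage of exhibiting $\rho$ itself as the two-sided inverse of $\Theta$ at once --- a fact the paper records only in a footnote, via the observation that $\sigma\circ\sigma=\id_{H\otimes A}$ for commutative $H$. Your closing caveat about the bookkeeping of iterated extensions of scalars is precisely what the paper handles through its standing cleavage conventions \eqref{C1}--\eqref{Cnew}, so nothing essential is missing.
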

\begin{proof}
Suppose that $\Theta$ satisfies \eqref{Eb1} and \eqref{Eb2}, let $S$ be the antipode of the Hopf algebra $H$, $\sigma$ as in the statement of \autoref{Isigma} and
\[
\rho:=\sigma^\ast\Theta\;,
\]
which makes sense because $H$ is commutative. Because of \eqref{Inv}, we can assume
\[
\sigma^\ast\left(\eta\otimes\id_A\right)^\ast M=\delta^\ast M\qquad\text{and}\qquad\sigma^\ast\delta^\ast M=\left(\eta\otimes\id_A\right)^\ast M\;,
\]
hence $\rho$ is a homomorphism $\delta^\ast M\to\left(\eta\otimes\id_A\right)^\ast M\overset{\eqref{HM}}{=}H\otimes M$.

Let $\tau:=\left(\mu\otimes\id_A\right)\circ\left(S\otimes\id_{H\otimes A}\right):H\otimes H\otimes A\to H\otimes A$, which is an algebra homomorphism because $H$ is commutative. Then $\Theta$ fulfills $\eqref{Eb1}$ (under our standing assumptions). Let us extend scalars on both sides of that equation via $\tau$. The left-hand side becomes
\[
\left(\tau^\ast\left(\id_H\otimes\delta\right)^\ast\Theta\right)\,\circ\,\left(\tau^\ast\left(\eta\otimes\id_{H\otimes A}\right)^\ast\Theta\right)\;.
\]
Notice that $\tau\circ\left(\id_H\otimes\delta\right)=\sigma$, so that we can assume
\[
\tau^\ast\left(\id_H\otimes\delta\right)^\ast\Theta=\sigma^\ast\Theta=\rho\;,
\]
and $\mu\circ\left(S\otimes\id_H\right)\circ\left(\eta\otimes\id_H\right)=\id_H$, hence $\tau\circ\left(\eta\otimes\id_{H\otimes A}\right)=\id_{H\otimes A}$, so that we can assume
\[
\tau^\ast\left(\eta\otimes\id_{H\otimes A}\right)^\ast\Theta=\Theta\;.
\]
Therefore we have $\rho\circ\Theta$ on the left-hand side. On the right-hand side, since $\tau\circ\left(\Delta\otimes\id_A\right)=\left(\eta\circ\epsilon\right)\otimes\id_A$, we can assume
\[
\tau^\ast\left(\Delta\otimes\id_A\right)^\ast\Theta=\left(\eta\otimes\id_A\right)^\ast\left(\epsilon\otimes\id_A\right)^\ast\Theta\overset{\eqref{Eb2}}{=}\id_{H\otimes M}\;.
\]
Hence $\rho\circ\Theta=\id_{H\otimes M}$.

Extending scalars via $\sigma$ on both sides of the identity $\rho\circ\Theta=\id_{H\otimes M}$, we have
\[
\left(\sigma^\ast\rho\right)\circ\rho=\id_{\delta^\ast\!M}\;.
\]
Therefore $\rho$ has both a right and a left inverse. This suffices to show that $\rho$, and hence $\Theta$, are invertible (\footnote{We also have $\sigma^\ast\rho=\Theta$. This identity can alternatively be obtained in a direct way, by checking that $\sigma\circ\sigma=\id_{H\otimes A}$ (for a commutative Hopf algebra $H$).}).
\end{proof}

Now we show, similarly, that for a commutative Hopf algebra $H$, a left-left relative $\left(A,H\right)$--Hopf module over $\delta$ always corresponds (via $\delta$) to an isomorphism.

\begin{prop}\label{CommHopf}
Suppose that an $(H\otimes A)$--module homomorphism
\[
\rho:\delta^\ast M\to\left(\eta\otimes\id_A\right)^\ast M
\]
and a coaction
\[
\overline\delta:M\to H\otimes M\overset{\eqref{HM}}{=}\left(\eta\otimes\id_A\right)^\ast M
\]
over $\delta$ correspond to each other via $\delta$. If $H$ is a commutative Hopf algebra then $\rho$ is an isomorphism.
\end{prop}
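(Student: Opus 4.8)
The plan is to run the argument of \autoref{Comm} in the opposite direction, replacing the role of \eqref{Eb1}--\eqref{Eb2} by the coaction axioms on $\overline\delta$ and exploiting the symmetry between $\rho$ and the candidate inverse $\Theta:=\sigma^\ast\rho$. First I would translate the hypotheses into identities for $\rho$. Since $\rho$ and $\overline\delta$ correspond via $\delta$, \autoref{Rho1} converts the coassociativity $\left(\Delta\otimes\id_M\right)\circ\overline\delta=\left(\id_H\otimes\overline\delta\right)\circ\overline\delta$ into the left-hand identity of \eqref{Lhs}, namely
\[
\left(\Delta\otimes\id_A\right)^\ast\rho=\left(\eta\otimes\id_{H\otimes A}\right)^\ast\rho\circ\left(\id_H\otimes\delta\right)^\ast\rho\;,
\]
while \autoref{Rho2}, together with the counit axiom $\left(\varepsilon\otimes\id_M\right)\circ\overline\delta=\id_M$, gives $\left(\varepsilon\otimes\id_A\right)^\ast\rho=\id_M$. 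These are the exact analogues of \eqref{Eb1} and \eqref{Eb2}, read off the Hopf-module side.

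Next, as in \autoref{Comm}, I would use commutativity of $H$ to make $\sigma$ (from \autoref{Isigma}) and $\tau:=\left(\mu\otimes\id_A\right)\circ\left(S\otimes\id_{H\otimes A}\right)$ into $\K$-algebra homomorphisms, and set $\Theta:=\sigma^\ast\rho$; by \eqref{Inv} this is a homomorphism $\left(\eta\otimes\id_A\right)^\ast M\to\delta^\ast M$. The core step is to extend scalars via $\tau$ across the first displayed identity. On its right-hand side the relations $\tau\circ\left(\eta\otimes\id_{H\otimes A}\right)=\id_{H\otimes A}$ and $\tau\circ\left(\id_H\otimes\delta\right)=\sigma$ (both established in \autoref{Comm}) collapse the composite to $\rho\circ\sigma^\ast\rho=\rho\circ\Theta$; on its left-hand side $\tau\circ\left(\Delta\otimes\id_A\right)=\left(\eta\circ\varepsilon\right)\otimes\id_A$ together with $\left(\varepsilon\otimes\id_A\right)^\ast\rho=\id_M$ collapses it to the identity of $\left(\eta\otimes\id_A\right)^\ast M$. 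This yields $\rho\circ\Theta=\id$.

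For the reverse composite I would invoke the fact (recorded in the footnote to \autoref{Comm}) that $\sigma\circ\sigma=\id_{H\otimes A}$ for commutative $H$, so that $\sigma^\ast\Theta=\rho$. Extending $\rho\circ\Theta=\id$ via $\sigma$ and using $\sigma\circ\left(\eta\otimes\id_A\right)=\delta$ from \eqref{Inv} then produces $\Theta\circ\rho=\id_{\delta^\ast M}$. Having a two-sided inverse, $\rho$ is an isomorphism.

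I do not expect a genuine obstacle: the statement is essentially the Hopf-module mirror of \autoref{Comm}, and once the translation through \autoref{Rho1} and \autoref{Rho2} is in place the computation is formally identical. The only point deserving care is the compatibility of the various chosen identifications of iterated extensions of scalars (the standing identifications \eqref{C1}, \eqref{C2}, \eqref{C3}, \eqref{C4}, \eqref{Cnew}, as well as $\sigma^\ast\left(\eta\otimes\id_A\right)^\ast M=\delta^\ast M$ and $\sigma^\ast\delta^\ast M=\left(\eta\otimes\id_A\right)^\ast M$), so that the symbols $\rho\circ\Theta$ and $\Theta\circ\rho$ really name endomorphisms of single fixed modules; I would simply reuse the choices already fixed in the proof of \autoref{Comm} rather than reprove them.
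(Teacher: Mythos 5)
Your proposal is correct and follows essentially the same route as the paper's proof: translate the coaction axioms into identities for $\rho$ via \autoref{Rho1} and \autoref{Rho2}, set $\Theta:=\sigma^\ast\rho$, and extend scalars via $\tau$ to obtain $\rho\circ\Theta=\id$. The only cosmetic difference is at the end: the paper extends $\rho\circ\Theta=\id$ via $\sigma$ to get $\Theta\circ\sigma^\ast\Theta=\id_{\delta^\ast M}$ and concludes by the left-inverse/right-inverse argument, whereas you additionally invoke $\sigma\circ\sigma=\id_{H\otimes A}$ to identify $\sigma^\ast\Theta$ with $\rho$ outright — both are fine.
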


\begin{proof}
Let $S$ be the antipode of $H$, $\sigma$ as in the statement of \autoref{Isigma} and
\[
\tau:=\left(\mu\otimes\id_A\right)\circ\left(S\otimes\id_{H\otimes A}\right)\;.
\]
Since $H$ is commutative, $\sigma$ is an algebra homomorphism, and because of \eqref{Inv} we can assume
\[
\sigma^\ast\left(\eta\otimes\id_A\right)^\ast M=\delta^\ast M\qquad\text{and}\qquad\sigma^\ast\delta^\ast M=\left(\eta\otimes\id_A\right)^\ast M\;.
\]
Therefore we can consider the homomorphism $\sigma^\ast\rho:H\otimes M=\left(\eta\otimes\id_A\right)^\ast M\to\delta^\ast M$, which henceforth can be denoted by $\Theta$. Let us make the assumptions of \autoref{Rho1}. Therefore, since $\overline{\delta}$ is a coaction we have
\[
\left(\Delta\otimes\id_A\right)^\ast\rho=\left(\eta\otimes\id_{H\otimes A}\right)^\ast\rho\circ\left(\id_H\otimes\delta\right)^\ast\rho\;.
\]
Le us extend scalars via $\tau$ on both sides of the above identity. On the left-hand side, since $\tau\circ\left(\Delta\otimes\id_A\right)=\left(\eta\circ\epsilon\right)\otimes\id_A$, we can assume
\[
\tau^\ast\left(\Delta\otimes\id_A\right)^\ast\rho=\left(\eta\otimes\id_A\right)^\ast\left(\epsilon\otimes\id_A\right)^\ast\rho\;.
\]
Since $\overline\delta$ is a coaction, under the assumptions \eqref{C4}, \eqref{Cnew} and according to \autoref{Rho2}, the above homomorphism equals $\id_{H\otimes M}$.

Regarding the right-hand side, we have $\mu\circ\left(S\otimes\id_H\right)\circ\left(\eta\otimes\id_H\right)=\id_H$, hence $\tau\circ\left(\eta\otimes\id_{H\otimes A}\right)=\id_{H\otimes A}$, so that we can assume
\[
\tau^\ast\left(\eta\otimes\id_{H\otimes A}\right)^\ast\rho=\rho\;;
\]
moreover, $\tau\circ\left(\id_H\otimes\delta\right)=\sigma$, so that we can assume
\[
\tau^\ast\left(\id_H\otimes\delta\right)^\ast\rho=\sigma^\ast\rho=\Theta\;.
\]
This way we get $\rho\circ\Theta$. Hence $\rho$ is a left inverse of $\Theta$, and extending scalars via $\sigma$ we also get $\Theta\circ\sigma^\ast\Theta=\id_{\delta^\ast M}$. Therefore $\Theta$ has a right inverse as well, thus it is invertible. Hence $\rho$ is invertible.
\end{proof}

Taking into account the outcome of the preceding subsection, we conclude that for commutative Hopf algebras, to give a homomorphism that satisfies \eqref{SwEb1} and \eqref{SwEb2} is the same as to give a left-left relative $\left(A,H\right)$--Hopf module over $\delta$.

\subsection{Exotic Examples}
The following example shows that a homomorphism $\Theta$ that satisfies \eqref{Eb1} and \eqref{Eb2}, may be not invertible when $H$ is not commutative.

\begin{exm}\label{Ex}
Let $\C^{2\times 2}$ be the $2\times 2$ matrix algebra over the complex numbers, and let us fix $H$ as the free Hopf algebra generated by the dual coalgebra $\left(\C^{2\times 2}\right)^\ast$. The definition is given in \cite[Def.~2]{T1} (cf.\ also \cite[Th.~11]{T1}); but also note that $H$ can be concisely characterized (up to isomorphisms) as being universal among Hopf algebras with a coalgebra morphism of $\left(\C^{2\times 2}\right)^\ast$ into them (cf.\ \cite[Lemma~1]{T1}). As usual, let $S$ denote the antipode. Notice also that $H$ is generated as a $\C$-vector space by $1$ and all elements $S^n\!\left(a^i_j\right)$, with $n$ running on nonnegative integers, $i,j$ running in $\{0,1\}$, and where $a^i_j\in\left(\C^{2\times 2}\right)^\ast$ are the (images in $H$ of the) matrix coefficients.
 
Now, let $A:=H$, $\delta:=\Delta$, $M:=A\oplus A=H\oplus H$ and
\[
\theta:M\to\delta^\ast M=\left(H\otimes H\right)_\Delta\otimes_HM
\]
be the homomorphism over $\eta\otimes\id_A$ defined by
\[
\theta\left(k^0,k^1\right):=\left(a^0_0\otimes k^0+a^0_1\otimes k^1\right)\otimes(1,0)+\left(a^1_0\otimes k^0+a^1_1\otimes k^1\right)\otimes(0,1)\;.
\]
Let
\[
\Theta: H\otimes M\to\left(H\otimes H\right)_\Delta\otimes_HM
\]
be the $(H\otimes H)$--module homomorphism corresponding to $\theta$ via $\eta\otimes\id_A$, that is, $\Theta$ is determined by the condition $\theta(m)=\Theta(1\otimes m)$. It is not difficult (though perhaps a bit cumbersome) to check the conditions \eqref{SwEb1}, \eqref{SwEb2}.

According to \cite[Prop.~4 and Rem.~13]{T1}, there exists a nonzero algebra $R$ and a $\C$-algebra homomorphism $w:H\to R$, such that
\[
\left(
\begin{array}{cc}
w\left(a^0_0\right)&w\left(a^0_1\right)\\\\
w\left(a^1_0\right)&w\left(a^1_1\right)
\end{array}
\right)
=
\left(
\begin{array}{cc}
1&y\\\\
z&yz
\end{array}
\right)
\]
for some appropriate $y,z\in R$.
Let
\[
\Theta_R: R\oplus R\to R\oplus R
\]
be obtained from $\Theta$ by extension of scalars via $w\otimes\varepsilon:H\otimes H\to R$, under the assumptions
\[
R\otimes_{H\otimes H}\left(H\otimes M\right)\ni r\otimes h\otimes\left(k^0,k^1\right)=\left(\varepsilon\!\left(k^0\right)rw(h),\varepsilon\!\left(k^1\right)rw(h)\right)\in R\oplus R\;,
\]
and (using Sweedler notation $\Delta(h)=:h_{(1)}\otimes h_{(2)}$)
\begin{multline*}
R\otimes_{H\otimes H}\left(H\otimes H\right)_\Delta\otimes_HM\ni r\otimes\left(h^0\otimes h^1\right)\otimes\left(k^0,k^1\right)\\
=\left(\varepsilon\!\left(h^1k^0_{(2)}\right)rw\!\left(h^0k^0_{(1)}\right), \varepsilon\!\left(h^1k^1_{(2)}\right)rw\!\left(h^0k^1_{(1)}\right)\right)\in R\oplus R\;.
\end{multline*}
Explicitly,
\[
\Theta_R\!\left(x^0,x^1\right)=\left(x^0+x^1y,x^0z+x^1yz\right)\;.
\]
In particular, $\Theta_R(-y,1)=(0,0)$. Since $(-y,1)\ne (0,0)$, $\Theta_R$ is not an isomorphism. Hence $\Theta$ is not an isomorphism.
\end{exm}

\begin{rem}
The homomorphism in \autoref{Ex} is somewhat induced by the usual left action of $\C^{2\times 2}$ on $\C^2$. If one considers the right action that comes from multiplying row vectors by a matrix, the homomorphisms $\Theta_R$ becomes an isomorphism. But in this case, by considering right modules instead, one gets \[\left(x_0,x_1\right)\mapsto\left(x_0+zx_1,yx_0+yzx_1\right)\;,\]
which has $(-z,1)$ in its kernel. This way one gets an example that works in the right-right case (which is considered in \cite{D}).

Unfortunately, we do not see how the technique of \autoref{Ex} could be adapted to the left-right case, which is considered in \cite{S}, \cite{M}, \cite{Sc}.
\end{rem}

In the next example, the same trick as above gives a \emph{left-right} relative $\left(H, H\right)$--Hopf module with a corresponding $(H\otimes H)$--module homomorphism $\left(H\otimes H\right)_\Delta\otimes_HM\to H\otimes M$ that is not an isomorphism.

\begin{exm}\label{ExHopf}
In notation of \autoref{Ex}, let us consider the homomorphism
\[
\overline\delta:M\to M\otimes H
\]
over $\delta=\Delta$ defined by
\begin{multline*}
\overline\delta\left(k^0,k^1\right):=
\left(k^0_{(0)},0\right)\otimes k^0_{(1)}a^0_0+\left(k^1_{(0)},0\right)\otimes k^1_{(1)}a^0_1\\
+\left(0,k^0_{(0)}\right)\otimes k^0_{(1)}a^1_0+\left(0,k^1_{(0)}\right)\otimes k^1_{(1)}a^1_1
\end{multline*}
(under sumless Sweedler notation $\delta(h)=:h_{(0)}\otimes h_{(1)}$). A routine verification confirms that $\overline\delta$ is a coaction, and hence defines a left-right relative $\left(H, H\right)$--Hopf module, provided that $\delta=\Delta$ is considered as a right coaction.

If
\[
\rho:\left(H\otimes H\right)_\Delta\otimes_HM\to M\otimes H
\]
is the $(H\otimes H)$--module homomorphism corresponding to $\overline\delta$ via $\delta$, and \[\rho_R:R\oplus R\to R\oplus R\] is obtained from it by extension of scalars via $\varepsilon\otimes w$ (under identifications similar to those in \autoref{Ex}), we have $\rho_R=\Theta_R$ and hence it is not an isomorphism. Therefore $\rho$ is not an isomorphism.
\end{exm}

\section{Noncommutative equivariant bundles and fibered categories}

\subsection{Basic facts}
Fibered categories are a proper context in which natural properties of fiber bundles, modules and sheaves can be analyzed. We assume \cite[Def.~3.5]{V} as the definition of a fibered category over a category $\mathcal{C}$. For the reader convenience, we also rewrite below the definition given in \cite[Def.~3.46]{V} of an equivariant object (see also \cite[4.1]{S}), provided that $G$ is a (covariant) functor $\mathcal{C}^\mathrm{op}\to(\mathrm{Grp})$ into the category of groups (\footnote{In the preamble of \cite[Sect.~3.8]{V} one finds $\mathcal{C}\to(\mathrm{Grp})$, probably by a misprint.}).
\begin{defn}\label{Recap}
Let $F:\mathcal{C}^\mathrm{op}\to(\mathrm{Set})$ and $G:\mathcal{C}^\mathrm{op}\to(\mathrm{Grp})$ be functors.

A \emph{left action} of $G$ on $F$ is a natural transformation
\[
G\times F\to F
\]
such that for every object $U$ of $\mathcal{C}$, we have that
\[
G(U)\times F(U)\to F(U)
\]
is a left action of the group $G(U)$ on the set $F(U)$. A \emph{left action} of $G$ on an object $X\in\mathcal{C}$ is a left action of $G$ on the contravariant hom functor $h_X$, $h_X(Y):=\Hom_{\mathcal{C}}(Y,X)$, $h_X(f)(\varphi):=\varphi\circ f$.

Given such a left action, let $\mathcal{F}$ be a fibered category over $\mathcal{C}$, with defining functor $p_{\mathcal{F}}:\mathcal{F}\to\mathcal{C}$. A \emph{$G$-equivariant object} of $\mathcal{F}$ is an object $\pi$ together with a left action of $G\circ p_{\mathcal{F}}$ on $\pi$, such that for all objects $\xi$ of $\mathcal{F}$ the map \[\Hom_{\mathcal{F}}(\xi,\pi)\to\Hom_{\mathcal{C}}\left(p_{\mathcal{F}}(\xi),p_{\mathcal{F}}(\pi)\right)\] given by the functor $p_{\mathcal{F}}$ is a $G\left(p_{\mathcal{F}}(\xi)\right)$--map (that is, it is equivariant with respect to the action of $G\left(p_{\mathcal{F}}(\xi)\right)$).
\end{defn}

In the rest of the present work, we let $\mathcal{C}$ be the opposite of the category of (not necessarily commutative) $\K$-algebras. To avoid confusion, given $\K$-algebra homomorphisms $\varphi:A\to B$, $\psi:B\to C$ (hence, arrows $B\to A$ and $C\to B$ in $\mathcal{C}$), we shall follow the common usage to denote composition: $\psi\circ\varphi$ (that is, we shall not use the symbol $\circ$ to denote the composition in $\mathcal{C}$). The following action of functors will be considered.

\begin{exm}\label{BA}
For our $\K$-algebra $A$ and $\K$-bialgebra $H$ (see \autoref{Standing}), and for each $\K$-algebra $B$, let us consider the $\K$-vector spaces $\Hom_\K(A,B)$ and $\Hom_\K(H,B)$ (sets of $\K$-vector spaces homomorphism). The convolution product
\[
f\ast g:=\mu_B\circ\left(f\otimes g\right)\circ\Delta\;,
\]
with $\mu_B:B\otimes B\to B$ being the multiplication of $B$, allows to define on $\Hom_\K(H,B)$ a (unital) $\K$-algebra structure. In particular we have a monoid with convolution as the (only) operation, which we denote by $Z(B)$. The subset of invertible elements is a group, which we denote by $G(B)$. This way we get functors $Z:\mathcal{C}^\mathrm{op}\to (\mathrm{Mon})$, $G: \mathcal{C}^\mathrm{op}\to (\mathrm{Grp})$.

If $f:H\to B$, $\varphi:A\to B$ are homomorphisms of $\K$-vector spaces, then
\[
f\bullet\varphi:=\mu_B\circ\left(f\otimes\varphi\right)\circ\delta\in\Hom_\K(A,B)\;.
\]
This gives a left monoid action
\[
Z(B)\times\Hom_\K(A,B)\to\Hom_\K(A,B)\;,\quad (f,\varphi)\mapsto f\bullet\varphi\;,
\]
and, by restriction, a left (group) action
 \[
G(B)\times\Hom_\K(A,B)\to\Hom_\K(A,B)\;.
\]
Denoting by $F$ the functor $\mathcal{C}^\mathrm{op}\to(\mathrm{Set})$ given by $F(B):=\Hom_\K(A,B)$, $F(g)(\varphi):=g\circ\varphi$, $g\in\Hom_{\mathcal{C}}(B',B)$, $\varphi\in F(B)$, we get natural transformations
\[
Z\times F\to F\;,\qquad G\times F\to F\;.
\]
The latter is a left action according to \autoref{Recap}, and the former can be regarded as a left action in a slightly extended sense.
\end{exm}

\begin{rem}\label{Fa}
In notation of the above example, when $B$ is commutative, that is, when $\mu_B$ is a $\K$-algebra homomorphism, $f\ast g$ and $f\bullet\varphi$ are $\K$-algebra homomorphisms. When, in addition, $H$ is a Hopf algebra with antipode $S$, we have that every $\K$-algebra homomorphism $f:H\to B$ is invertible in $F(B)$, with $f\circ S$ as its inverse. Hence, in this case $\Hom_{\mathcal{C}}(B,H)$ is a subgroup of $G(B)$, and $\Hom_{\mathcal{C}}(B,A)\subseteq F(B)=\Hom_\K(A,B)$ is invariant under the action $\Hom_{\mathcal{C}}(B,H)\times\Hom_\K(A,B)\to\Hom_\K(A,B)$.

This way, we get a left action
\[
h_H\times h_A\to h_A
\]
on the subcategory $\mathcal{C}_{\mathrm{ab}}$ of $\mathcal{C}$ given by commutative $\K$-algebras (where $h_H(B):=\Hom_{\mathcal{C}}(B,H)$, $h_A(B)=\Hom_{\mathcal{C}}(B,A)$). When the Hopf algebra $H$ is commutative as well (hence a group object in $\mathcal{C}_{\mathrm{ab}}$), $h_H$ is its (contravariant) hom-functor in $\mathcal{C}_{\mathrm{ab}}$. When, in addition, $A$ is commutative, $\delta$ is an action in $\mathcal{C}_{\mathrm{ab}}$, and we recover the action $h_H\times h_A\to h_A$ induced by $\delta$ in the usual way (see, e.g., \cite[Prop.~2.16]{V}).

Unfortunately, in spite of the fact that the action has been defined for general $A$ and (Hopf) $H$, this gives little additional information in the noncommutative case. Indeed, $h_H\times h_A\to h_A$ is isomorphic to the action $h_{H_{\mathrm{ab}}}\times h_{A_{\mathrm{ab}}}\to h_{A_{\mathrm{ab}}}$ given by the abelianization  $\delta_{\mathrm{ab}}:A_{\mathrm{ab}}\to\left(H\otimes A\right)_{\mathrm{ab}}=A_{\mathrm{ab}}\otimes H_{\mathrm{ab}}$ of $\delta$.
\end{rem}

\subsection{Fibered categories of left modules}

The class $\mathcal{F}$ of all (small) left modules over $\K$-algebras can be considered as a fibered category over $\mathcal{C}$ in various ways. To this end, let $A,B\in\mathcal{C}$, $M$ a left $A$-module, $N$ a left $B$-module, $\varphi:A\to B$ a $\K$-algebra homomorphism, that is, a morphism $B\to A$ in $\mathcal{C}$. The perhaps first option to define a morphism $N\to M$ in $\mathcal{F}$ is to take a pair $(f,\varphi)$ such that $f:M\to N$ is a module homomorphism over $\varphi$ (this means as before that $f$ is additive ad $f(am)=\varphi(a)f(m)$). Note that $f$ can also be described as an $A$-module homomorphism of $M$ into the module $\varphi_\ast N$ obtained from $N$ by restriction of scalars, and it corresponds to a $B$-module homomorphism $\varphi^\ast M\to N$. In this case, to fix a cleavage of $\mathcal{F}$, that is, a choice of a cartesian morphism $N\to M$ in $\mathcal{F}$ for every given left $A$-module $M$ and $\K$-algebra homomorphism $\varphi$, is the same as to fix an extension of scalars $M\to\varphi^\ast M$, still for every $M$ and $\varphi$ (usually such a choice is given by some general construction for tensor products). In this framework, we can say that the conditions \eqref{Eb1}, \eqref{Eb2} that we considered on $\Theta:\left(\eta\otimes\id_A\right)^\ast M\to\delta^\ast M$ in the previous section, are written under a choice of a cleavage that fulfills \eqref{C1}, \eqref{C2}, \eqref{C3} and $\eqref{C4}$ (\footnote{It can be proven that such a cleavage exists for each given $\Theta$ (but this does not mean that there exists a single cleavage that works, in that respect, for every $\Theta$; note also that by carefully checking existence for each $\Theta$, one discovers that when $\delta=\eta\otimes\id_A$ we have that \eqref{C3} is a consequence of \eqref{C1} and \eqref{C2}). That is why in \autoref{Disc} we said that our conventions on tensor products and extension of scalars consists in that a cleavage is chosen in the course of the exposition. In more complicated situations such a context-depending cleavage may well not exist, but actually it is not strictly needed: when two different cartesian morphisms with the same target and base arise, one may denote them differently (or abuse the notation).}).

Another option is given again by a pair $(f,\varphi)$, where now $f$ is a $B$-module homomorphism $N\to\varphi^\ast M$, with $\varphi^\ast M$ taken from some fixed cleavage of the former fibered category. As a third option, $f$ in the pair $(f,\varphi)$ may be taken as an $A$-module homomorphism $\varphi_\ast N\to M$, which corresponds to a $B$-module homomorphism of $N$ into the module $\Hom_A(B,M)$ obtained from $M$ by coextension of scalars.

As extensively explained in \autoref{Disc}, in this work we choose the middle option because in geometric situations it corresponds to the familiar notion of a vector bundle morphism over different bases. Moreover, with that choice, in the commutative case we have a natural embedding into the category of quasi-coherent sheaves on schemes. On the other hand, we do not intend to prevent other choices which may turn to be interesting as well. Hence, from now on $\mathcal{F}$ will denote the category of left modules over $\K$-algebras with morphisms $(f,\varphi)$ with $f$ being a $B$-module homomorphism $N\to\varphi^\ast M$. We shall sometimes refer to the cleavage given by the choices of $M\to\varphi^\ast M$ as the \emph{defining cleavage} (which, strictly speaking, is \emph{not} a cleavage of $\mathcal{F}$). The composition $(f,\varphi)\circ (g,\psi)$ is the morphism \[\left(\psi^\ast f\circ g,\psi\circ\varphi\right)\] when $M$ is such that $\psi^\ast\varphi^\ast M=(\psi\circ\varphi)^\ast M$ (and where $\circ$ in $\psi^\ast f\circ g$ and $\psi\circ\varphi$ denotes the compositions of maps of modules and maps of algebras). In general, one has $(f,\varphi)\circ (g,\psi):=\left(\iota\circ\psi^\ast f\circ g,\psi\circ\varphi\right)$,
with $\iota:\psi^\ast\varphi^\ast M\to(\psi\circ\varphi)^\ast M$ being the natural isomorphism. A cleavage of $\mathcal{F}$ is clearly given by the morphisms $\left(\id_{\varphi^\ast M},\varphi\right):\varphi^\ast M\to M$ (and clearly depends on the choice of the defining cleavage).

\begin{rem}
In our standing notation (see \autoref{Standing}), we have that $\left(\Theta,\delta\right)$ is a morphism $\left(\eta\otimes\id_A\right)^\ast M\to M$ in $\mathcal{F}$, under the assumption $H\otimes M=\left(\eta\otimes\id_A\right)^\ast M$.  We can equivalently express the conditions \eqref{Eb1} and \eqref{Eb2} in terms of morphism identities in $\mathcal{F}$, more similar to the coaction conditions \eqref{Coaction}. Indeed, let us suppose that the defining cleavage fulfills \eqref{C1}, \eqref{C2} and $\eqref{C4}$. Because of \eqref{C1}, $\left(\,\left(\eta\otimes\id_{H\otimes A}\right)^\ast\Theta\,,\,\id_H\otimes\delta\,\right)$ is a morphism into $\left(\eta\otimes\id_A\right)^\ast M$. Because of \eqref{C2}, we have that \eqref{Eb1} is equivalent to
\[
\left(\Theta,\delta\right)\circ\left(\,\left(\eta\otimes\id_{H\otimes A}\right)^\ast\Theta\,,\,\id_H\otimes\delta\,\right)=\left(\Theta,\delta\right)\circ\left(\,\id_{\left(\Delta\otimes\id_A\right)^\ast\left(\eta\otimes\id_A\right)^\ast M}\,,\,\Delta\otimes\id_A\,\right)\;.
\]
Because of \eqref{C4}, $\left(\id_M,\varepsilon\otimes\id_A\right)$ is a morphism into $\left(\eta\otimes\id_A\right)^\ast M$, and we have that \eqref{Eb2} is equivalent to
\[
\left(\Theta,\delta\right)\circ\left(\,\id_M\,,\,\varepsilon\otimes\id_A\,\right)=\left(\id_M,\id_A\right)\;.
\]
\end{rem}

To properly work in general (regardless of the assumptions), the above conditions should to be intended up to replacing $\mathcal{F}$ with an isomorphic category obtained by changing the defining cleavage; alternatively, some natural isomorphisms between modules in \eqref{C1}, \eqref{C2} and $\eqref{C4}$, respectively, should be explicitly displayed.

\subsection{The commutative case}
\label{OkEq}

In the above settings, our motivations for considering homomorphisms that satisfy \eqref{SwEb1} and \eqref{SwEb2} (or \eqref{Eb1} and \eqref{Eb2}) as noncommutative generalizations of equivariant bundles can be summarized as follows.

Since a commutative Hopf algebra is a group object in the category $\mathcal{C}_{\mathrm{ab}}$ of commutative $\K$-algebras, \cite[Prop.~3.47]{V} shows that  when $A$ is commutative as well, to give an equivariant object in $\mathcal{C}_{\mathrm{ab}}$ is the same as to give a homorphism $\Theta$ that satisfies \eqref{Eb1} and \eqref{Eb2} (and hence a homomorphism $\theta$ that satisfies \eqref{SwEb1} and \eqref{SwEb2}). Moreover, in this case \autoref{Comm} is a consequence of \cite[Prop.~3.49]{V}. 

If one restricts $\mathcal{C}$ to algebras of regular functions on affine algebraic varieties, and $\mathcal{F}$ to finitely generated and locally free modules over them, we have an equivalence with the category of algebraic vector bundles over affine varieties. Equivariant objects obviously correspond to usual equivariant (algebraic) vector bundles.

\subsection{Obstacles in the noncommutative case}

Unfortunately, to fit noncommutative Hopf algebras in the above framework is problematic. The basic reason is that they are not group objects in $\mathcal{C}$ (for further discussions see \cite[nn.~3.1, 3.2]{S}). To overcome this difficulty, one might attempt, for instance, to consider the action $G\times F\to F$ of \autoref{BA}. Let us briefly pursue this idea.

In order to define an action of $G(N)$ on $\Hom_{\mathcal F}(N,M)$ for each $N\in\mathcal{F}$, as required in \autoref{Recap}, let us consider a morphism $(f,\varphi):N\to M$ in $\mathcal{F}$ over a morphism $B\to A$ in $\mathcal{C}$, that is, $\varphi:A\to B$ is a $\K$-algebra homomorphism and $f:N\to\varphi^\ast M$ a $B$-module homomorphism. Let $v:H\to B$ be a homomorphism of $\K$-vector spaces and suppose that $\psi:=\mu_B\circ(v\otimes\varphi)=H\otimes A\to B$ is a $\K$-algebra homomorphism (as it is always the case when $B$ is commutative and $v$ is a $\K$-algebra homomorphism). Since $\psi\circ\left(\eta\otimes\id_A\right)=\varphi$, there exists a unique morphism $\left(g,\psi\right):N\to\left(\eta\otimes\id_A\right)^\ast M$ over $\psi$ such that \[\left(\id_{\left(\eta\otimes\id_A\right)^\ast M},\eta\otimes\id_A\right)\circ\left(g,\psi\right)=(f,\varphi)\;.\]
We have that $(\Theta,\delta)\circ\left(g,\psi\right)$ is a morphism $N\to M$ over $\psi\circ\delta=\mu_B\circ(v\otimes\varphi)\circ\delta$, that is, in notation of \autoref{BA}, over $v\bullet\varphi$. The above construction allows every $v$ for which $\psi$ is a $\K$-algebra homomorphism to act on $\Hom_{\mathcal F}(N,M)$. In the commutative case, this defines $M$ as an equivariant object by means of $\left(\Theta,\delta\right)$ as in \cite[Prop.~3.47]{V}.

For arbitrary $H$ and $A$, to have something meaningful (hopefully, to define $M$ as an equivariant object in such a way that $\left(\Theta,\delta\right)$ can be reconstructed back) we would need a set of allowable $v$ as large as possible. Unfortunately, the condition on $\psi$ of being a $\K$-algebra homomorphism is quite restrictive. For instance notice that, since $\psi\circ\left(\id_H\otimes\eta_A\right)=v$, with $\eta_A$ being the unit map of $A$ and assuming $\id_H\otimes\K=H$, we have that $v$ must be a $\K$-algebra homomorphism. Besides, to have a functor into groups we also need $v$ to be invertible in the convolution monoid $\Hom_\K(H,B)$ (cf.~\autoref{Fa}).

One might try to consider only commutative $\K$-algebras $B$, but this would be too restrictive. Indeed, as noted at the end of \autoref{Fa}, in this case the action on the base is isomorphic to the action given by the abelianizations of $H$ and $A$; and it is not difficult to see that the action on $\Hom_{\mathcal F}(N,M)$ given by $\Theta$ is isomorphic to the action obtained by replacing $M$ with $M_{\mathrm{ab}}:=A_{\mathrm{ab}}\otimes_AM$ and changing $\Theta$ accordingly.

\subsection{Noncommutative equivariant bundles and monads}

Let $_A\mathcal M$ be the category of left $A$-modules. In \cite[4.4]{S} it is pointed out that if we give $H\otimes M$ the left $A$-module structure induced by $\delta$, we obtain an endofunctor of $_A\mathcal M$, which is a comonad $\mathbf{G}$ in a natural way. Moreover, like in \cite[4.4.2]{S}, one easily recognizes that left-left relative $(A,H)$--Hopf modules are comodules over the comonad $\mathbf{G}$. To this end, let us mention that by a (left) comodule over a comonad $T:\mathcal{C}\to\mathcal{C}$ may be meant a functor $F:\mathcal{C}'\to\mathcal{C}$, together with a natural transformation $F\to TF$ that satisfies two natural compatibility conditions with the structure of $T$. However, a more restricted meaning is often in use: by a comodule over $T$ (also called a coalgebra over $T$) is simply meant an object $F$ of $\mathcal{C}$, together with a morphism $f:F\to TF$ such that $\varepsilon_TF\circ f=\id_F$ and $\delta_TF\circ f=Tf\circ f$, with $\varepsilon_T$ and $\delta_T$ being the structural natural transformations of the comonad $T$. One can regard the latter notion as a particular case of the former, simply by replacing $F$ with the functor of the (terminal) category with one morphism $\iota$, into the category $\mathcal{C}$, that sends $\iota$ into $\id_F$. It is easy to check that with the restricted notion, $\mathbf{G}$-comodules are precisely left-left relative $(A,H)$--Hopf modules.

Our purpose in this concluding subsection is to make a similar construction for homomorphisms that satisify \eqref{SwEb1} and \eqref{SwEb2}. To this end, we consider the endofunctor $\mathbf H$ of $_A\mathcal M$ that associates with each left $A$-module $M$ the codomain $\delta^\ast M$ of (all) homomorphisms $\theta:M\to\delta^\ast M$ (and $\Theta$), considered as an $A$-module by restriction of scalars via $\nu:=\eta\otimes\id_A:A\to H\otimes A$ (the action on homomorphisms is obviously $f\mapsto\delta^\ast f$). Concisely: $\mathbf{H}=\nu_\ast\delta^\ast$.

Let $_\nu(H\otimes A)$ denote $H\otimes A$ equipped with the left $A$-module structure induced by $\nu$ and $(H\otimes A)_\varphi$ be $H\otimes A$ with the right $A$-module structure induced by a $\K$-algebra homomorphism $\varphi:A\to H\otimes A$. Since for every right $A$-module $N$, the vector space $H\otimes N$ is a tensor product $N\otimes_A{}_\nu(H\otimes A)$ (with $n\otimes(h\otimes a)=h\otimes na$), we have that $H\otimes H\otimes A$ is a tensor product $(H\otimes A)_\varphi\otimes_A{}_\nu(H\otimes A)$. Hence, we can assume
\[
(H\otimes A)_\varphi\otimes_A{}_\nu(H\otimes A)=H\otimes H\otimes A
\]
with, in a sumless Sweedler notation for $\varphi$,
\[
\left(h\otimes a\right)\otimes\left(h'\otimes a'\right)=h'\otimes ha'_{(-1)}\otimes aa'_{(0)}\;.
\]
Let $_\nu(H\otimes A)_\varphi$ be the $A$-bimodule with the left structure induced by $\nu$ and the right structure induced by $\varphi$. We have an $A$-bimodule \[_\nu(H\otimes H\otimes A)_\varphi={_\nu(H\otimes A)_\varphi\otimes_A{}_\nu(H\otimes A)_\varphi}\] such that
\[
a_{\mathrm l}\left(h\otimes h'\otimes a\right)a_{\mathrm r}=h{a_{\mathrm r}}_{(-1)}\otimes h'{a_{\mathrm r}}_{(0)(-1)}\otimes a_{\mathrm l}a{a_{\mathrm r}}_{(0)(0)}\;.
\]
One easily checks that $\Delta\otimes\id_A:{}_\nu(H\otimes A)_\varphi\to{}_\nu(H\otimes H\otimes A)_\varphi$ is a left module homomorphism, and that it is a right module homomorphism if and only if $(\id_H\otimes\varphi)\circ\varphi=(\Delta\otimes\id_A)\circ\varphi$. Since $\mathbf H$ is the tensor product by $_\nu(H\otimes A)_\delta$ on the left, when $\varphi$ is the coaction $\delta$ we have that $\Delta\otimes\id_A$ gives a natural transformation $\mathbf H\to\mathbf H\mathbf H$. Moreover, the homomorphism $\varepsilon\otimes\id_A$ gives a natural transformation of $\mathbf H$ into the identity functor. Taking into account the coalgebra properties of $\Delta$ and $\varepsilon$, one easily checks that in this way $\mathbf H$ becomes a comonad.

To show that $\theta$ makes $M$ an $\mathbf H$-comodule (in the restricted sense) if and only if satisfies \eqref{SwEb1} and \eqref{SwEb2}, it suffices another, not difficult check.

\end{document}